\definecolor{webgreen}{rgb}{0,0,1}
\definecolor{recrown}{rgb}{1,.2,.6}
\begin{document}
\newtheorem{theorem}{Theorem}
\newtheorem{corollary}[theorem]{Corollary}
\newtheorem{lemma}[theorem]{Lemma}
\theoremstyle{definition}
\newtheorem*{example}{Example}
\newtheorem*{examples}{Examples}
\newtheorem*{notation}{Notation}
\theoremstyle{remark}
\newtheorem*{remarks}{\bf Remarks}
\theoremstyle{thmx}
\newtheorem{thmx}{\bf Theorem}
\renewcommand{\thethmx}{\text{\Alph{thmx}}}
\newtheorem{lemmax}{Lemma}
\renewcommand{\thelemmax}{\text{\Alph{lemmax}}}
\theoremstyle{thmx}
\newtheorem{property}{\bf Property}
\renewcommand{\theproperty}{\text{\Alph{property}}}
\theoremstyle{definition}
\newtheorem*{definition}{Definition}
\newtheorem*{remark}{\bf Remark}
\title{\bf Metrically Round and Sleek Metric Spaces}
\markright{}
\subjclass[2010]{Primary 54E35;  46A55; 52A07; 46B20}

\author{Jitender Singh$^{\dagger,*}$}
\address{$~^\dagger$ Department of Mathematics, Guru Nanak Dev University, Amritsar-143005, India}
\author{T. D. Narang$^\ddagger$}
\address{$~^\ddagger$ Department of Mathematics, Guru Nanak Dev University, Amritsar-143005, India}
\footnotetext[3]{$^{,*}$Corresponding author: jitender.math@gndu.ac.in}
\footnotetext[2]{tdnarang1948@yahoo.co.in}
\date{}
\maketitle
\parindent=0cm
\begin{abstract}
A  round metric space is the one in which closure of each open ball is the corresponding closed ball. By a  sleek metric space, we mean a metric space in which  interior of each closed ball is the corresponding open ball. In this, article we establish some  results on  round metric spaces and sleek metric spaces.
\end{abstract}
\section{Introduction.}
All normed linear spaces are known to have the following two properties with the metric induced by the norm:
\begin{property}\label{P1}
Closure of every open ball is the corresponding closed ball.
\end{property}
\begin{property}\label{P2}
Interior of every closed ball is the corresponding open ball.
\end{property}
Properties \ref{P1} and \ref{P2} may not hold in metric spaces or even in linear  metric spaces (see for example, \cite{artemiadis,wong}). However, starting with the work of Art\'emiadis \cite{artemiadis}, there are some necessary and sufficient conditions known in the the literature for Property \ref{P1} or \ref{P2} to hold  in metric spaces (see \cite{Na,TH,JSTD2020}).  For a metrizable space, Nathanson \cite[p.~738]{Na} defined a round metric as the one for which Property \ref{P1} holds.  A metrizable space whose topology is induced by a round metric was then defined to be a \emph{ round metric space}.  In fact, Nathanson \cite{Na} obtained several interesting classes of  round metric spaces (see Theorems \ref{Na1}-\ref{Na4} below). On the other hand,  Kiventidis  \cite{TH} was the first to discuss metric spaces having Property \ref{P2}. Recently, Singh and Narang \cite{JSTD2020}  have discussed metric spaces and linear metric spaces which have  Property \ref{P2} under some convexity conditions.
Analogously, for a metrizable space, we define \emph{sleek metric} as the one for which Property \ref{P2} holds. A metrizable space whose topology is induced by a sleek metric will be called a \emph{sleek metric space}. In the present paper, in addition to exploring metric spaces, linear metric spaces, and subspaces for Properties \ref{P1}  or \ref{P2}, we also investigate the behavior of these properties under taking union, intersection, and product. First we fix some notations.
Throughout, the symbol $X$ will denote a metrizable topological space with at least two points.
Let $d$ be a metric inducing the topology of $X$. For any $x\in X$ and $r>0$, the sets $B_d(x,r)=\{y\in X~|~d(x,y)<r\}$ and  $B_d[x,r]=\{y\in X~|~d(x,y)\leq r\}$, respectively denote the open and the closed balls in $X$.
We also denote by $\bar{B}_d(x,r)$,  the closure of $B_d(x,r)$ in $X$ and $B^\circ_d[x,r]$, the interior of $B_d[x,r]$ in $X$. For any two subsets $Y$ and $Z$ of $X$, we use $Y\setminus Z$ to denote the set $\{y\in Y~|~y\not\in Z\}$.  Also, any point of the cartesian product $Y\times Z$ will be denoted by $y\times z$, where $y\in Y$ and $z\in Z$. Finally,
for each natural number $n$, the symbol $\rho_n$ will denote the Euclidean metric of $\mathbb{R}^n$.
\begin{examples}
1.  The metric $\rho_2$ of $\mathbb{R}^2$ is round for the subspaces shown in Fig.~\ref{F1a}-\ref{F1c}, but $\rho_2$ is not
round for the subspace shown in Fig.~\ref{F1d}. It may be remarked that the shapes corresponding to Fig.~\ref{F1abc}(a)  and (b) are round in accordance with the geometric intuition but Fig.~\ref{F1abc}(c) is not round as per the geometric perception.
\begin{figure}[h!!!]
\subfigure[{}]{\includegraphics[width=0.25\textwidth]{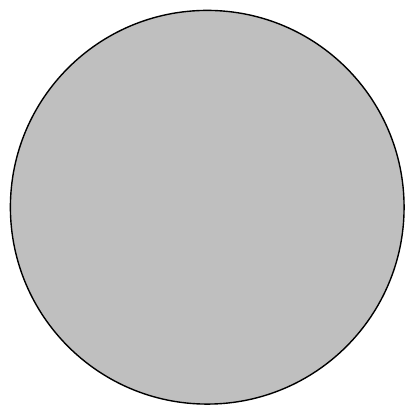}\label{F1a}}%
\subfigure[{}]{\includegraphics[width=0.25\textwidth]{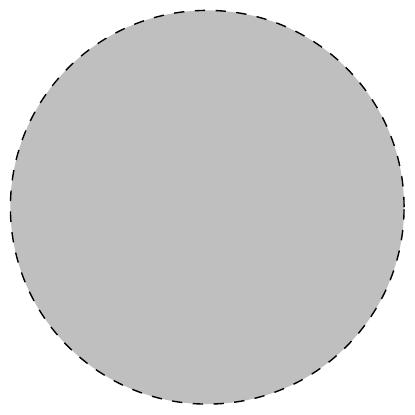}\label{F1b}}%
\subfigure[{}]{\includegraphics[width=0.25\textwidth]{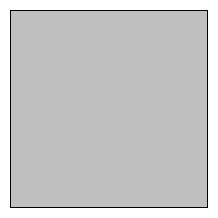}\label{F1c}}%
\subfigure[{}]{\includegraphics[width=0.25\textwidth]{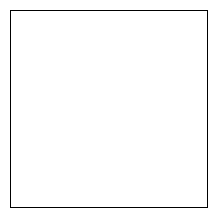}\label{F1d}}
\caption{} \label{F1abc}
\end{figure}

2. The metric $\rho_1$ is round for $\mathbb{R}$ as well as  for each of the subspaces  $\mathbb{Q}$, $(0,1)$, and $[0,1]$.

3.  No metric equivalent to $\rho_1$ is round for the subspace $[0,1]\cup [2,3]$ of $\mathbb{R}$. (see Theorem \ref{Na1} below).

4.  The metric $\rho_2$ of $\mathbb{R}^2$ is round for each of the subspaces $S^1=\{x\times y\in \mathbb{R}^2~|~x^2+y^2=1\}$ and $[-1,1]\times {0}$ but $\rho_2$ is not round for the subspace $S^1\cap ([-1,1]\times 0)$, since this intersection is the two point set as shown in Fig.~\ref{F2}.
\begin{figure}[h!!!]
\centering
 \includegraphics[width=0.50\textwidth]{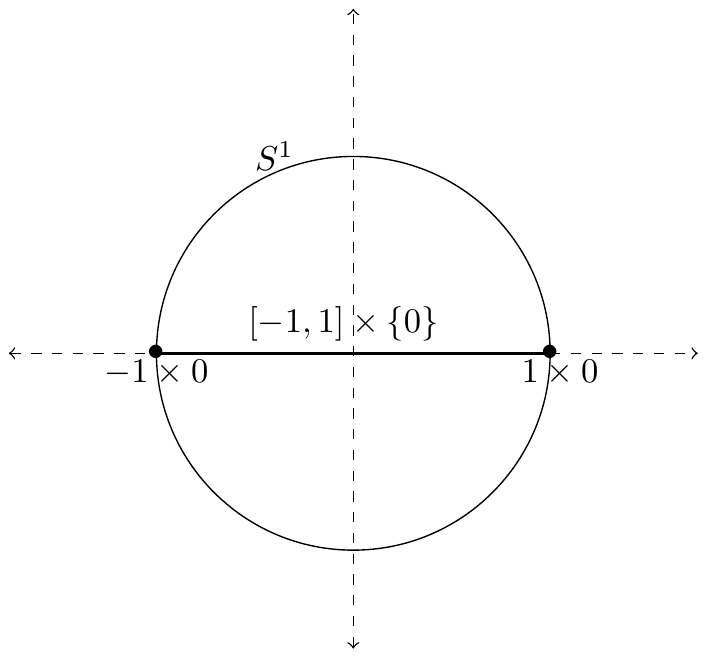}
\caption{} \label{F2}
\end{figure}
\end{examples}
The last two  examples show that the property of subspaces being  round in a given metric space is not always preserved under union and intersection. It was remarked in \cite{TH} that an open or dense subspace of a  round metrizable space is  round.
Nathanson \cite[Theorems 1-4]{Na} established the following main results for the  round metric spaces:
\begin{thmx}\label{Na1}
Let $X = A \cup K$ be a metrizable space, where $A$ and $K$ are nonempty, disjoint, closed sets, and $K$ is compact. Then no metric for $X$ is round.
\end{thmx}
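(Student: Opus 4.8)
The plan is to fix an arbitrary metric $d$ inducing the topology of $X$ and to produce a single open ball whose closure is properly contained in the corresponding closed ball; since $d$ will be arbitrary, this shows that no metric for $X$ is round. The geometric idea is simple: take a ball centred at a point $a_0$ of $A$ whose radius equals the distance from $a_0$ to the compact set $K$. Its ``boundary sphere'' just touches $K$ at a nearest point $k_0$, and because $K$ is closed and sits at a positive distance from $A$ near $k_0$, the point $k_0$ will lie in the closed ball but not in the closure of the open ball.

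Carrying this out, I would first pick any $a_0\in A$ and set $r:=\inf\{d(a_0,k)~|~k\in K\}$. Because $K$ is closed and $a_0\notin K$ we have $r>0$, and because $K$ is compact while $k\mapsto d(a_0,k)$ is continuous, the infimum is attained: there is $k_0\in K$ with $d(a_0,k_0)=r$, so $k_0\in B_d[a_0,r]$.

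Next I would check that $k_0\notin\bar{B}_d(a_0,r)$. Since $k_0\in K$ while $A$ is closed and disjoint from $K$, the number $s:=\inf\{d(k_0,a)~|~a\in A\}$ is strictly positive, so the ball $B_d(k_0,s)$ contains no point of $A$ and hence, since $X=A\cup K$, is contained in $K$. On the other hand every $k\in K$ satisfies $d(a_0,k)\ge r$, so $B_d(a_0,r)$ contains no point of $K$. Consequently $B_d(k_0,s)$ is an open neighbourhood of $k_0$ disjoint from $B_d(a_0,r)$, whence $k_0\notin\bar{B}_d(a_0,r)$. Combining this with the previous step, $\bar{B}_d(a_0,r)$ misses $k_0$ while $B_d[a_0,r]$ contains it; the two sets differ, so $d$ does not satisfy Property \ref{P1}. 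As $d$ was an arbitrary metric for $X$, no metric for $X$ is round.

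I expect the only genuine obstacle to be the attainment of the infimum $\inf\{d(a_0,k)~|~k\in K\}$, and this is precisely where compactness of $K$ is indispensable: without it the closed ball of this radius may coincide with the closure of the open ball, and in fact the whole conclusion can fail for noncompact $K$ --- for example $\mathbb{Q}=\big(\mathbb{Q}\cap(-\infty,\sqrt{2})\big)\cup\big(\mathbb{Q}\cap(\sqrt{2},\infty)\big)$ is a union of two disjoint nonempty closed sets, neither of which is compact, yet $\rho_1$ is round for $\mathbb{Q}$ (as noted in the examples above). Everything else in the argument is routine point-set topology.
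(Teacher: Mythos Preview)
Your argument is correct. Note, however, that the paper does not supply its own proof of Theorem~\ref{Na1}: it is quoted verbatim from Nathanson~\cite{Na}. What the paper does contribute is Theorem~\ref{t1}, which packages exactly the mechanism you have written out by hand. Using that result the proof becomes a one-liner: since $A$ is closed, the set $U:=K=X\setminus A$ is open; pick any $a_0\in A=X\setminus U$, and by compactness of $K$ the continuous map $d(a_0,\cdot)\colon U\to\mathbb{R}$ attains a minimum, so Theorem~\ref{t1} says $d$ is not round. Your construction of the auxiliary radius $s$ and the neighbourhood $B_d(k_0,s)\subseteq K$ is precisely the ``converse'' half of the proof of Theorem~\ref{t1} specialised to this $U$; what you gain is a self-contained argument that does not appeal to the characterization, while the route via Theorem~\ref{t1} isolates the general principle (existence of a minimum of $d(x,\cdot)$ on an open set not containing $x$) and makes clear that only openness of $K$ and compactness are doing work.
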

\begin{thmx}\label{Na2}
Let $(X,d_1)$ and $(Y,d_2)$ be metric spaces without isolated points, and let $f:X\rightarrow Y$ be a surjection such that for $x,y,z,\in X$, if $d_1(x,z)<d_1(x,y)$, then $d_2(f(x),f(z))<d_2(f(x),f(y))$. If $d_1$ is a round metric for $X$, then $d_2$ is a round metric for $Y$.
\end{thmx}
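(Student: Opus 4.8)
The plan has two parts: first show that the hypothesis forces $f$ to be continuous, and then transfer the roundness of $d_1$ to $d_2$ one ball at a time. For continuity, I would fix $z_0\in X$ and set $L=\inf\{d_2(f(z_0),f(v)):v\in X,\ v\neq z_0\}$, a nonempty infimum since $X$ has at least two points. If $L>0$, then by surjectivity every point of $B_{d_2}(f(z_0),L)$ has the form $f(v)$, and $v=z_0$ is forced because any $v\neq z_0$ would give $d_2(f(z_0),f(v))\geq L$; hence $B_{d_2}(f(z_0),L)=\{f(z_0)\}$, making $f(z_0)$ an isolated point of $Y$, contrary to hypothesis. So $L=0$. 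Now, given $\varepsilon>0$, pick $v^{*}\neq z_0$ with $d_2(f(z_0),f(v^{*}))<\varepsilon$ and put $\delta=d_1(z_0,v^{*})>0$; for any $v$ with $d_1(z_0,v)<\delta$ we either have $v=z_0$ (trivial) or $d_1(z_0,v)<d_1(z_0,v^{*})$, and then the defining implication applied to the triple $z_0,v,v^{*}$ gives $d_2(f(z_0),f(v))<d_2(f(z_0),f(v^{*}))<\varepsilon$. Thus $f$ is continuous at $z_0$, and hence on $X$.

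For the transfer of roundness, fix $y_0\in Y$ and $r>0$. The inclusion $\bar{B}_{d_2}(y_0,r)\subseteq B_{d_2}[y_0,r]$ holds in every metric space, so it suffices to take $y\in B_{d_2}[y_0,r]$ and show $y\in\bar{B}_{d_2}(y_0,r)$; we may assume $d_2(y_0,y)=r$, the case $d_2(y_0,y)<r$ being immediate. Using surjectivity, write $y_0=f(x_0)$ and $y=f(z_0)$ and set $t_0=d_1(x_0,z_0)$, noting $t_0>0$ (otherwise $y=y_0$ and $r=0$). By roundness of $d_1$ we have $z_0\in B_{d_1}[x_0,t_0]=\bar{B}_{d_1}(x_0,t_0)$, so there is a sequence $z_n\to z_0$ with $d_1(x_0,z_n)<t_0=d_1(x_0,z_0)$. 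The defining implication applied to $x_0,z_n,z_0$ yields $d_2(f(x_0),f(z_n))<d_2(f(x_0),f(z_0))=r$, i.e.\ $f(z_n)\in B_{d_2}(y_0,r)$; and by continuity $f(z_n)\to f(z_0)=y$. Hence $y\in\bar{B}_{d_2}(y_0,r)$, which completes the argument.

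The only genuinely non-routine step is establishing continuity of $f$: it is not among the hypotheses, and one has to notice that it is forced by surjectivity together with $Y$ having no isolated points, via the observation that $L=0$. Everything after that is bookkeeping with the defining inequality and the definitions of closure and of a round metric; in fact, once roundness of $d_1$ is assumed, the hypothesis that $X$ has no isolated points does not appear to be used in this argument.
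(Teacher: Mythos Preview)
Your proof is correct. Note, however, that the paper does not give its own proof of Theorem~\ref{Na2}: it is quoted as one of Nathanson's results. The closest the paper comes is the proof of the sleek analogue, Theorem~\ref{th2}, and there the authors simply invoke Nathanson for the stronger fact that $f$ is a homeomorphism, then push the property across using continuity together with the characterization in Theorem~\ref{thA}.

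Compared with that route, your argument is more self-contained and slightly sharper. You prove continuity of $f$ directly from surjectivity and the absence of isolated points in $Y$ (via the $L=0$ observation), and you never need injectivity or openness of $f$; continuity alone suffices for the transfer step. Your final remark is also on point: once $d_1$ is round, $X$ automatically has no isolated points (cf.\ the corollary to Theorem~\ref{t1}), so that hypothesis is redundant in the statement; only the no-isolated-points assumption on $Y$ is doing real work, and only to force $L=0$.
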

\begin{thmx}\label{Na3}
Let $(X,d)$ be a metric space. Then there exists an equivalent metric on $X$ that is bounded but not round.
\end{thmx}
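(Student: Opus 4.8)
The plan is to leave the topology of $X$ untouched and merely truncate $d$ below the scale of a single fixed pair of points; this simultaneously makes the metric bounded and destroys roundness. Since $X$ has at least two points, fix $a\neq b$ in $X$, put $r=\tfrac12 d(a,b)>0$, and define $\rho(x,y)=\min\{d(x,y),r\}$ for $x,y\in X$. The first step is to verify that $\rho$ is a bounded metric equivalent to $d$. Symmetry and positive-definiteness are immediate (here one uses $r>0$), and the triangle inequality follows from a two-line case split: if $\rho(x,y)+\rho(y,z)\geq r$ there is nothing to prove because $\rho\leq r$, while otherwise each summand is $<r$ and hence equals the corresponding $d$-distance, so $\rho(x,z)\leq d(x,z)\leq d(x,y)+d(y,z)=\rho(x,y)+\rho(y,z)$. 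Equivalence with $d$ holds because $B_\rho(x,\varepsilon)=B_d(x,\varepsilon)$ whenever $\varepsilon\leq r$, so the sets $B_d(x,\varepsilon)$ with $0<\varepsilon<r$ form a neighbourhood base at $x$ for both topologies; and $\rho\leq r$ gives boundedness.

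The second step is to exhibit a ball witnessing failure of Property~\ref{P1}, for which I take the ball of radius $r$ about $a$. On the one hand $B_\rho[a,r]=\{y\in X:\min\{d(a,y),r\}\leq r\}=X$. On the other hand $B_\rho(a,r)=\{y\in X:d(a,y)<r\}=B_d(a,r)$, and $b\notin\overline{B_d(a,r)}$: the set $B_d(b,\,d(a,b)-r)=B_d(b,r)$ is an open neighbourhood of $b$ disjoint from $B_d(a,r)$, since a point $y$ lying in both would give $d(a,b)\leq d(a,y)+d(y,b)<r+(d(a,b)-r)=d(a,b)$. Hence $b\in B_\rho[a,r]\setminus\overline{B_\rho(a,r)}$, so $\overline{B_\rho(a,r)}\neq B_\rho[a,r]$ — indeed the former is a proper subset of the latter. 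Thus $\rho$ is not round, and being also equivalent to $d$ and bounded, it is the required metric.

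The argument is short and there is no genuine obstacle, but one point deserves emphasis: the truncation level must be chosen strictly below $d(a,b)$ (more generally, strictly below $\mathrm{diam}_d X$). The popular bounded reparametrisations $d/(1+d)$ and $\arctan\circ\,d$ are strictly increasing homeomorphisms of $[0,\infty)$ onto an interval, hence they transform balls into balls and preserve roundness exactly, so they are useless here; and the naive truncation $\min\{d,1\}$ need not be non-round, for example when every open $d$-ball of radius $1$ is dense in $X$. Truncating at $r=\tfrac12 d(a,b)$ is precisely what forces $b$ out of $\overline{B_d(a,r)}$ while keeping $b$ inside the closed ball $B_\rho[a,r]=X$, which is exactly the discrepancy we need.
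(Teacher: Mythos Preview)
Your proof is correct and uses exactly the construction the paper attributes to Nathanson: the truncated metric $\rho(x,y)=\min\{d(x,y),r\}$ with $0<r<d(a,b)$ (you take the specific value $r=\tfrac12 d(a,b)$), together with the observation that $B_\rho[a,r]=X$ while $b\notin\overline{B_\rho(a,r)}$. The paper does not spell out the proof of Theorem~\ref{Na3} itself but invokes the same construction verbatim in its proof of the sleek analogue (Theorem~\ref{th6}), so your argument is essentially identical to the intended one, only with more of the routine verifications (triangle inequality, equivalence of topologies) written out.
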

\begin{thmx}\label{Na4}
Let $\{(X_k,d_k)\}_{k=1}^\infty$ be a countable family of  metric spaces such that  $\text{diam}(X_k)<\infty$ for all but finitely many $k$. The product space $X=\prod_{k}X_k$ is metrizable. If $x=(x_k)$, $y=(y_k)\in X$, define
\begin{eqnarray}\label{ee1}
D(x,y) &=& \sum_{k=1}^\infty {d_k(x_k,y_k)}/{(\lambda_k2^k)},
\end{eqnarray}
where $\lambda_k=1$ if $\text{diam}(X_k)=\infty$ and $\lambda_k=\text{diam}(X_k)$ if $\text{diam}(X_k)<\infty$.  Then metric $D$ is a metric for $X$.  The metric  $D$ is round for $X$ if and only if the metric $d_k$ is round for $X_k$ for all $k$.
\end{thmx}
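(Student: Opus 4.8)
The plan is to establish three things in order: that $D$ is a well-defined metric on $X$, that $D$ induces the product topology, and that $D$ is round if and only if every $d_k$ is round. Throughout I would assume each $\mathrm{diam}(X_k)>0$, so that every $\lambda_k>0$; a singleton factor only contributes an indeterminate $0/0$ term and is vacuously round, so it may be ignored. For well-definedness, note that all but finitely many $k$ satisfy $\mathrm{diam}(X_k)<\infty$, and for such $k$ one has $d_k(x_k,y_k)/\lambda_k\le 1$, whence $d_k(x_k,y_k)/(\lambda_k2^{k})\le 2^{-k}$; thus the series defining $D$ is a finite sum of finite terms together with a series dominated by $\sum_k 2^{-k}$, and it converges. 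Symmetry, the triangle inequality, and $D(x,y)=0\Leftrightarrow x=y$ then pass term by term from the $d_k$. For the topology, I would observe that each projection $\pi_k\colon(X,D)\to(X_k,d_k)$ is Lipschitz, since $d_k(x_k,y_k)\le\lambda_k2^{k}\,D(x,y)$; as the product topology is the coarsest making all $\pi_k$ continuous, it is contained in the $D$-topology. For the reverse containment, given $x\in X$ and $\varepsilon>0$, fix $N$ exceeding every index of infinite diameter and large enough that $\sum_{k>N}2^{-k}<\varepsilon/2$, and choose $\delta_k>0$ for $k\le N$ with $\sum_{k\le N}\delta_k/(\lambda_k2^{k})<\varepsilon/2$; then the basic open set $\bigl(\prod_{k\le N}B_{d_k}(x_k,\delta_k)\bigr)\times\prod_{k>N}X_k$ lies inside $B_D(x,\varepsilon)$.

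For the roundness equivalence I would first record the elementary reformulation: a metric space $(Y,\varrho)$ is round if and only if for all distinct $u,v\in Y$ and all $\varepsilon>0$ there is $w\in Y$ with $\varrho(u,w)<\varrho(u,v)$ and $\varrho(w,v)<\varepsilon$. This holds because $\overline{B_\varrho(u,r)}\subseteq B_\varrho[u,r]$ is automatic (the closed ball is closed), so Property \ref{P1} for $(Y,\varrho)$ reduces to showing that every $v$ with $\varrho(u,v)=r>0$ lies in $\overline{B_\varrho(u,r)}$, which is exactly the displayed condition with $r=\varrho(u,v)$; radii $r$ not attained as a distance from $u$ need no attention since then $B_\varrho[u,r]=B_\varrho(u,r)$. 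Note also that this condition is insensitive to multiplying $\varrho$ by a positive constant.

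Now suppose each $d_k$ is round, and let $x=(x_k)$ and $y=(y_k)$ be distinct, with $\varepsilon>0$. Choosing $m$ with $x_m\ne y_m$ and invoking roundness of $d_m$, pick $z_m\in X_m$ with $d_m(x_m,z_m)<d_m(x_m,y_m)$ and $d_m(z_m,y_m)$ as small as desired, and put $z_k=y_k$ for $k\ne m$. Only the $m$-th summand of $D$ changes, and it strictly decreases, so $D(x,z)<D(x,y)$, while $D(z,y)=d_m(z_m,y_m)/(\lambda_m2^{m})<\varepsilon$ once $d_m(z_m,y_m)$ is small enough; hence $D$ is round. Conversely, suppose $D$ is round and fix $k$. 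Given distinct $a,b\in X_k$ and $\varepsilon>0$, fix any $p=(p_j)\in X$ and let $x,y\in X$ agree with $p$ in every coordinate except the $k$-th, where $x_k=a$ and $y_k=b$; then $D(x,y)=d_k(a,b)/(\lambda_k2^{k})$. By the approximation property for $D$ there is $z\in X$ with $D(x,z)<d_k(a,b)/(\lambda_k2^{k})$ and $D(z,y)$ arbitrarily small. Since every summand of $D$ is nonnegative, $d_k(a,z_k)/(\lambda_k2^{k})\le D(x,z)$ forces $d_k(a,z_k)<d_k(a,b)$, and $d_k(z_k,b)/(\lambda_k2^{k})\le D(z,y)$ makes $d_k(z_k,b)<\varepsilon$ for $z$ sufficiently near $y$; so $z_k$ witnesses the approximation property for $d_k$, and $d_k$ is round.

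The metric-and-topology part is routine, its one mild subtlety being the need to quarantine the finitely many infinite-diameter factors before estimating the tail of the series. The real content is the roundness equivalence, and the only delicate step there is the converse: one must use a \emph{fixed} basepoint $p$ so that $D(x,y)$ collapses to the single scaled summand $d_k(a,b)/(\lambda_k2^{k})$, and then use nonnegativity of the remaining summands to transfer both the strict inequality and the smallness back to the $k$-th coordinate. I expect this bookkeeping, rather than any conceptual difficulty, to be the main thing to get right.
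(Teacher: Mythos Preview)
Your proof is correct, but there is nothing in the paper to compare it against: Theorem~\ref{Na4} is not proved here. It is one of the results quoted verbatim from Nathanson~\cite{Na} (introduced as ``Nathanson \cite[Theorems 1--4]{Na} established the following main results\ldots'') and is used only as background for the sleek analogues in Theorems~\ref{th3} and~\ref{th3G}.

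On its own merits your argument is sound. The reformulation of roundness as the approximation property ``for distinct $u,v$ and every $\varepsilon>0$ there is $w$ with $\varrho(u,w)<\varrho(u,v)$ and $\varrho(w,v)<\varepsilon$'' is exactly right, and both directions of the equivalence go through cleanly by altering a single coordinate while freezing the rest. The converse step you single out---using a common basepoint $p$ so that $D(x,y)$ collapses to the lone $k$-th summand, and then reading off $d_k(a,z_k)<d_k(a,b)$ and $d_k(z_k,b)<\varepsilon$ from the nonnegativity of the other summands---is handled correctly. The topology verification is also fine; your quarantining of the finitely many infinite-diameter factors before the tail estimate is the right maneuver. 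The remark that singleton factors (where $\lambda_k=0$) must be discarded is a genuine gap in the \emph{statement} as printed, not in your proof.
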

Interestingly, as we shall see in this paper, the analogues of Theorems \ref{Na1}-\ref{Na4} hold for  sleek metric spaces too.

Now coming to Property \ref{P2}, one observes that among the various subspaces of $\mathbb{R}^2$ as shown in Fig.~\ref{F1abc},  the metric $\rho_2$ is sleek only for the subspace \ref{F1b}. None of the two subspaces given in  Fig.~\ref{F1c} or \ref{F1d} is sleek with respect to the metric $\rho_2$ due to the presence of corner points. This observation is also in accordance with the intuitive notion of \emph{geometric} sleekness. On the other hand, the non-sleek case \ref{F1a} is  not in agreement with the geometric intuition.
The next two examples show that as in the  round  metric spaces, metric sleekness too is not always preserved under union and intersection.
\begin{examples}
1.  We consider the subspaces
$X_1=\{\cos t\times \sin t~|~t\in (-\pi/4,\pi/4)\}$ and $X_2=\{\cos t\times \sin t~|~t\in (3\pi/4,5\pi/4)\}$
of $\mathbb{R}^2$ with the metric $\rho_2$ as shown in Fig.~\ref{F3a}.
We observe that the metric $\rho_2$ is sleek for $X_1$ as well as $X_2$. Now consider the metric $\rho_2$ for the subspace $Z=X_1\cup X_2$. Here, \begin{eqnarray*}
B_{\rho_2}[1\times 0,2]\cap Z=Z\neq (B_{\rho_2}[1\times 0,2]\cap Z).
\end{eqnarray*}
Consequently, interior of the closed ball $B_{\rho_2}[1\times 0,2]\cap Z$ in $Z$ is $Z$ which is not equal to the corresponding open ball $B_{\rho_2}(1\times 0,2)\cap Z$. Thus, the metric $\rho_2$ is not sleek for $Z$.

2. Let $Y_1=\mathbb{R}\times (-\infty,0]$ and $Y_2=[0,\infty)\times \mathbb{R}$.
The metric $\rho_2$ is sleek for each of the subspaces $Y_1$ and $Y_2$ of $\mathbb{R}^2$.  However, the metric $\rho_2$ is not sleek for the subspace $Y_1\cap Y_2=[0,\infty)\times (-\infty,0]$ as can be visualized from Fig.~\ref{F3b} as the darker-shaded region.
We see that $0\times 0 \in (B_{\rho_2}^\circ[1\times -1,\sqrt{2}]\cap Y_1\cap Y_2)$, but $0\times 0\not\in (B_{\rho_2}(1\times -1,\sqrt{2})\cap Y_1\cap Y_2)$. 
\begin{figure}[h!!!]
\centering
\subfigure[]{\includegraphics[width=0.4\textwidth]{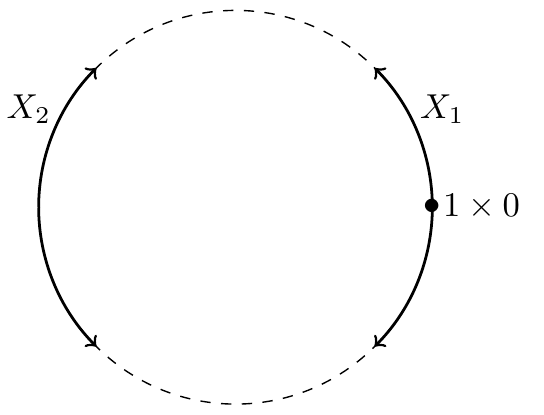}\label{F3a}}%
\subfigure[]{\includegraphics[width=0.55\textwidth]{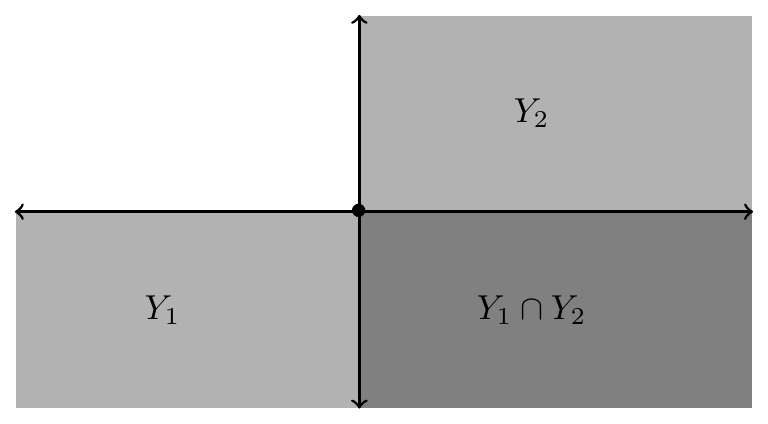}\label{F3b}}
\caption{}
\end{figure}
\end{examples}
Among the subspaces $(0,1)$, $[0,1]$, and $[0,1]\cup [2,3]$ of real line, the metric $\rho_1$ is round and sleek for $(0,1)$, round but not sleek for $[0,1]$, and neither round nor sleek for $[0,1]\cup [2,3]$. Now
consider the subspace $X'=\mathbb{R}\times \{0,1\}$ of $\mathbb{R}^2$ with the metric $\rho_2$. For any point $a\times b$ in $X'$ and $r>0$, we find that
\begin{eqnarray*}
B_{\rho_2}[a\times b,r]=\begin{cases} C:=\{x\times b~\big{|}~|x-a|\leq r\}, &~\text{if}~0<r<1;\\
C\cup \{a\times (1-b)\},&~\text{if}~r=1;\\
C\cup \{x\times (1-b)~\big{|}~|x-a|\leq \sqrt{r^2-1}\},&~\text{if}~1<r.
\end{cases}
\end{eqnarray*}
Each of these three type of closed balls are shown in Fig.~\ref{F4} as the thick parts. In the second case (b), we have
\begin{figure}[h!!!]
 \centering
 \subfigure[$0<r<1$]{\centering\includegraphics[width=0.48\textwidth]{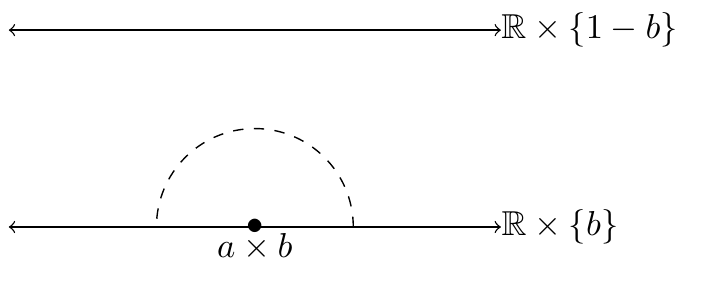}}%
 \subfigure[$r=1$]{\centering\includegraphics[width=0.48\textwidth]{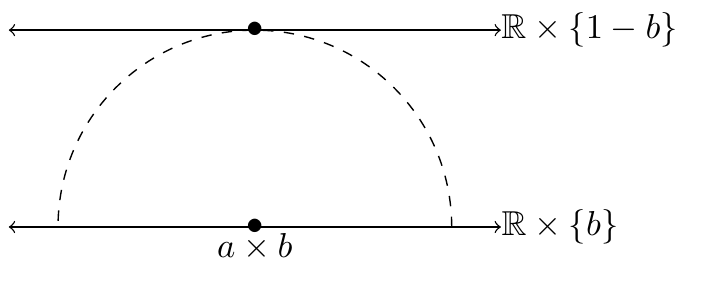}}
 \subfigure[$r>1$]{\centering\includegraphics[width=0.53\textwidth]{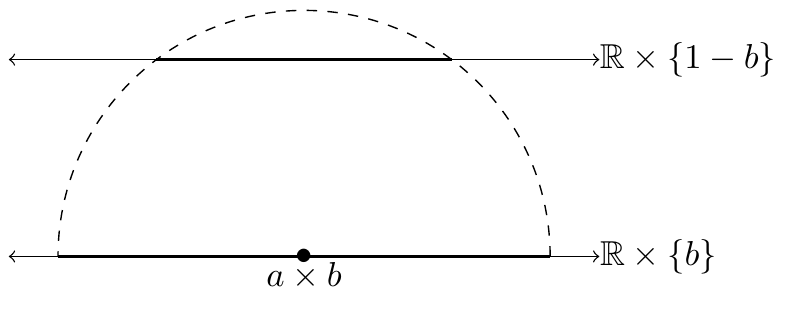}}
    \caption{}\label{F4}
\end{figure}
$B_{\rho_2}(a\times (1-b),1/2)\cap{B}_{\rho_2}(a\times b,1)=\emptyset$, and therefore,  $a\times (1-b)\not\in \bar{B}_{\rho_2}(a\times b,1)$. Thus, the metric ${\rho_2}$ is not round for $X'$.
Also, if either $0<r<1$, or $1<r$, then clearly we have $B_{\rho_2}^\circ[a\times b,r]=B_{\rho_2}(a\times b,r)$. Now consider the case $r=1$. The point $a\times (1-b)$ is not an interior point of $B_{\rho_2}[a\times b,1]$, since for every $\epsilon>0$, $B_{\rho_2}(a\times (1-b),\epsilon)$ always contains the point $z=(a+\epsilon/2)\times (1-b)$, where ${\rho_2}(a\times b,z)=\sqrt{1+\epsilon^2/4}>1$. Thus, $B_{\rho_2}^\circ[a\times b,1]=B_{\rho_2}(a\times b,1)$. We conclude that the metric $\rho_2$ is sleek for $X'$.

Thus, in general, the notions of  round metric space and  sleek metric space are different.

Metric spaces satisfying certain convexity conditions turn out to be  round, or sleek, or round as well as sleek. We mention below some of such known results.
\begin{remarks} Let $(X,d)$ be a metric space.

1. If $(X,d)$ is \emph{$\lambda$-convex}, that is,  it has the property that for any  $x,y\in X$ and a fixed $\lambda\in (0,1)$, there exists $z\in X$ such that $d(z,x)=(1-\lambda)d(x,y)$ and $d(z,y)=\lambda d(x,y)$, then the metric $d$ is round for $X$ (see \cite[Theorem 5]{Na}).

2. Let $(X,d)$ be complete; and let $(X,d)$ be \emph{metrically convex} or \emph{convex}, that is, it has the property that for every pair of distinct points $x$ and $y$ in $X$, there exists $z\in X$ such that $z\not\in \{x,y\}$ and $d(x,z)+d(z,y)=d(x,y)$.  If $(X,d)$ is \emph{externally convex}, that is, it has the property that  for every pair of distinct points $x$ and $y$ of $X$, there exists $z\in X$ different from $x$ and $y$  such that $d(x,y)+d(y,z)=d(x,z)$, then the metric $d$ is sleek for $X$ (see \cite[Theorem 2.4]{JSTD2020}.

3. If $(X,d)$ is \emph{strongly externally convex}, that is, it has the property that for every pair of distinct points $x$ and $y$ in $X$ and any $s>d(x,y)$, there exists a point $z$ in $X$ such that $d(x,y)+d(y,z)=d(x,z)=s$, then the metric $d$ is sleek for $X$ (see \cite[Theorem 2.5]{JSTD2020}).
\end{remarks}
\section{Some properties of round and sleek metric spaces.}
We begin by proving the following characterization of non-round metrics.
\begin{theorem}\label{t1}
Let $(X,d)$ be a metric space. The metric $d$ is not round for $X$ if and only if there exists an open set $U$ and  $x\in X\setminus U$ such that the map $d(x,\cdot):U\rightarrow \mathbb{R}$ has a minimum.
\end{theorem}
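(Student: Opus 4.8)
The plan is to unwind the definition of roundness into a statement about a single ball failing, and then translate "a point sits in the closed ball but not in the closure of the open ball'' into "a distance function attains a minimum on a neighbourhood of that point.'' Throughout I will use the fact that $\bar B_d(x,r)\subseteq B_d[x,r]$ always holds, since $B_d[x,r]$ is closed and contains $B_d(x,r)$; hence $d$ fails to be round precisely when $B_d[x,r]\setminus\bar B_d(x,r)\neq\emptyset$ for some $x\in X$ and $r>0$.

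For the forward direction, suppose $d$ is not round and fix such $x$, $r$, and a point $y\in B_d[x,r]\setminus\bar B_d(x,r)$. Since $d(x,y)<r$ would force $y\in B_d(x,r)\subseteq\bar B_d(x,r)$, we must have $d(x,y)=r$. Because $y\notin\bar B_d(x,r)$, there is $\epsilon>0$ with $B_d(y,\epsilon)\cap B_d(x,r)=\emptyset$; replacing $\epsilon$ by $\min\{\epsilon,r\}$ (still positive, and the disjointness is preserved) we may also assume $\epsilon\le r=d(x,y)$, so that $x\notin B_d(y,\epsilon)$. Put $U=B_d(y,\epsilon)$, an open set with $x\in X\setminus U$. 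For every $u\in U$ we have $u\notin B_d(x,r)$, i.e.\ $d(x,u)\ge r=d(x,y)$, and since $y\in U$ this shows $d(x,\cdot)\colon U\to\mathbb{R}$ attains its minimum (equal to $r$) at $y$.

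For the converse, suppose there is an open set $U$ and $x\in X\setminus U$ such that $d(x,\cdot)\colon U\to\mathbb{R}$ attains a minimum, say at $y\in U$, with value $m:=d(x,y)$; note $m>0$ because $x\notin U$ forces $x\neq y$. By minimality, $d(x,u)\ge m$ for all $u\in U$, so $u\notin B_d(x,m)$ for every $u\in U$; thus $U$ is an open neighbourhood of $y$ disjoint from $B_d(x,m)$, whence $y\notin\bar B_d(x,m)$. On the other hand $d(x,y)=m$ gives $y\in B_d[x,m]$. Therefore $\bar B_d(x,m)\subsetneq B_d[x,m]$, so $d$ is not round for $X$.

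This argument is essentially a bookkeeping exercise, so I do not anticipate a serious obstacle; the only points requiring a little care are the observation that $d(x,y)$ must equal $r$ (not merely be $\le r$) in the forward direction, and the mild shrinking of $\epsilon$ needed to guarantee $x\notin U$, which is what lets $x$ play the role of the external base point.
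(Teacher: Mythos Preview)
Your proof is correct and follows essentially the same approach as the paper's: both directions hinge on producing a neighbourhood of the witness point $y$ that is disjoint from the open ball $B_d(x,d(x,y))$. The only cosmetic difference is your choice of $U$ in the forward direction---you take the small ball $B_d(y,\epsilon)$ (and must shrink $\epsilon$ to exclude $x$), whereas the paper takes the larger set $X\setminus\bar B_d(x,d(x,y))$, which automatically excludes $x$; both choices work for the same reason.
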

\begin{proof}
Suppose the metric $d$ is not round for $X$. Then there exists a pair of distinct points $u$ and $v$ in $X$ for which $v\not\in \bar{B}_d(u,d(u,v))$. Consequently, there exists an $\epsilon>0$, such that $B_d(v,\epsilon)\cap {B}_d(u,d(u,v))=\emptyset$, and so, $B_d(v,\epsilon)\subset X\setminus\bar{B}_d(u,d(u,v))$. Then $U=X\setminus\bar{B}_d(u,d(u,v))$ is an open set in $X$ such that $u\not\in U$,  $v\in U$, and the map $d(u,\cdot):U\rightarrow \mathbb{R}$ has the minimum  $d(u,v)$.

Conversely, let there be an open set $U$ in $X$ and a point $x\in X\setminus U$, such that the map $d(x,\cdot):U\rightarrow \mathbb{R}$ has a minimum value. Suppose $y\in U$ for which $d(x,y)=\inf\{d(x,z)~|~z\in U\}$. Since for every $t\in B_d(x,d(x,y))$, we have $d(x,t)<d(x,y)$, this in view of the fact that $d(x,y)$ is the minimum value of $d(x,\cdot)$ on $U$ gives $t\not\in U$. Consequently, $B_d(x,d(x,y))\cap U=\emptyset$, and so, $B_d(x,d(x,y))\subset X\setminus U$. Then $\bar{B}_d(x,d(x,y))\subseteq \overline{X\setminus U}=X\setminus U$, and since $y\not\in X\setminus U$, we find that $y\not\in \bar{B}_d(x,d(x,y))$. So, $d$ is not a round  metric for $X$.
\end{proof}
\begin{corollary}
A metrizable space with at least two points and having an isolated point is not metrically round.
\end{corollary}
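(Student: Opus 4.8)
The plan is to apply Theorem \ref{t1} directly, exploiting the fact that an isolated point gives us a singleton open set, and a function on a singleton trivially attains a minimum. Fix an arbitrary metric $d$ inducing the topology of $X$; we must show $d$ is not round, which by definition of ``metrically round'' will establish the corollary.

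First I would let $p$ be an isolated point of $X$, so that $U := \{p\}$ is an open subset of $X$. Since $X$ has at least two points, the set $X \setminus U$ is nonempty, and I can choose some $x \in X \setminus U$; note $x \neq p$, so $d(x,p) > 0$. Next I would observe that the map $d(x,\cdot) : U \to \mathbb{R}$ is defined on the one-point set $U = \{p\}$, hence it takes the single value $d(x,p)$, which is therefore (trivially) its minimum, attained at $p \in U$. Thus the pair $(U, x)$ satisfies the hypothesis of the ``converse'' direction of Theorem \ref{t1}: there is an open set $U$ and a point $x \in X \setminus U$ for which $d(x,\cdot)\restriction U$ has a minimum. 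By Theorem \ref{t1}, $d$ is not round for $X$.

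Finally, since $d$ was an arbitrary metric compatible with the topology of $X$, no metric for $X$ is round, i.e.\ $X$ is not metrically round. There is essentially no obstacle here; the only point requiring a moment's care is the role of the ``at least two points'' hypothesis, which is exactly what guarantees $X\setminus U \neq \emptyset$ so that a valid choice of $x$ exists. (Indeed, for a one-point space the statement would fail, since the trivial metric on a single point is vacuously round.)
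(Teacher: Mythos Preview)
Your proof is correct and follows essentially the same approach as the paper: both take the singleton $\{p\}$ (the paper uses $y$) as the open set $U$, pick any other point $x$, and invoke Theorem~\ref{t1} since $d(x,\cdot)$ on a one-point set trivially has a minimum. Your version is slightly more explicit about why the ``at least two points'' hypothesis is needed and about quantifying over all compatible metrics, but the argument is the same.
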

\begin{proof} Let $X$ be such a space, and let
the metric $d$ induces the topology of $X$. If $y\in X$ is an isolated point, then the  set $\{y\}$ is open in $X$. So, for any $x\in X\setminus \{y\}$, the map $d(x,\cdot): \{y\}\rightarrow \mathbb{R}$ has the minimum value $d(x,y)$.
\end{proof}
A subset $S$ of a metric space $(X,d)$ is said to be a \emph{metric segment} joining two distinct points $x$ and $y$ of $X$ if there exists a closed interval $[a,b]$ in real line and an isometry $\gamma:[a,b]\rightarrow (X,d)$ which maps $[a,b]$ onto $S$ with $\gamma(a)=x$ and $\gamma(b)=y$. If the metric space $(X,d)$ has the property that every pair of points can be joined by a metric segment, then $(X,d)$ is convex (see \cite[p.~35]{b1}). The converse known as, the fundamental theorem of metric convexity, is often true, and it states that in a complete convex metric space, any two distinct points can be joined by a metric segment. This result was proved in \cite{menger} by Karl Menger, one of the pioneers in the study of metric spaces.
\begin{theorem}\label{t2}
If a metric space $(X,d)$ has the property that every pair of distinct points can be joined by a metric segment, then the metric $d$ is round for $X$.
\end{theorem}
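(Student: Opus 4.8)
The plan is to reduce roundness to a two-point condition and then produce the required limit points by travelling along a metric segment. First I would record the reformulation that is implicit in the proof of Theorem~\ref{t1}: the metric $d$ is round for $X$ if and only if $v\in\bar B_d(u,d(u,v))$ for every pair of distinct points $u,v\in X$. Indeed, $\bar B_d(x,r)\subseteq B_d[x,r]$ always holds because $B_d[x,r]$ is closed and contains $B_d(x,r)$, while for the reverse inclusion any $y\in B_d[x,r]$ with $d(x,y)<r$ already lies in $B_d(x,r)$, and any $y$ with $d(x,y)=r>0$ is distinct from $x$, so the two-point condition places it in $\bar B_d(x,d(x,y))=\bar B_d(x,r)$.

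Next, for fixed distinct points $u,v\in X$ I would invoke the hypothesis to obtain a metric segment joining them, i.e.\ a closed interval $[a,b]\subset\mathbb{R}$ and an isometry $\gamma\colon[a,b]\to(X,d)$ with $\gamma(a)=u$ and $\gamma(b)=v$; since $\gamma$ is an isometry, $b-a=d(\gamma(a),\gamma(b))=d(u,v)$. For every $t\in[a,b)$ one then has $d(u,\gamma(t))=d(\gamma(a),\gamma(t))=t-a<b-a=d(u,v)$, so $\gamma(t)\in B_d(u,d(u,v))$. Because $\gamma$ is continuous (every isometry is), $\gamma(t)\to\gamma(b)=v$ as $t\to b^-$, and therefore $v$ lies in the closure of $B_d(u,d(u,v))$. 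As $u$ and $v$ were arbitrary distinct points, the reformulation above shows $d$ is round for $X$.

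I do not expect a serious obstacle here; the only points needing a little care are the equivalence established in the first step and the observation that the isometric parametrization automatically makes the distance from $u$ to $\gamma(t)$ equal to the parameter displacement $t-a$. If one prefers a contradiction argument, the same idea runs through Theorem~\ref{t1}: were $d$ not round, there would be an open set $U$, a point $x\notin U$, and a point $y\in U$ realizing $\min_{z\in U} d(x,z)$; joining $x$ to $y$ by a metric segment and retreating slightly from $y$ toward $x$ along it produces a point of $U$ (since $U$ is open and $y\in U$) at strictly smaller distance from $x$, contradicting the minimality of $d(x,y)$.
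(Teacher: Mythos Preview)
Your proof is correct. Your primary argument is a direct one: you first isolate the elementary reformulation that roundness is equivalent to $v\in\bar B_d(u,d(u,v))$ for all distinct $u,v$, and then exhibit $v$ as the limit of the points $\gamma(t)$, $t\to b^-$, which lie in the open ball because $d(u,\gamma(t))=t-a<b-a=d(u,v)$. The paper instead argues by contradiction through Theorem~\ref{t1}: assuming non-roundness yields an open $U$, a point $x\notin U$, and a minimizer $y\in U$ of $d(x,\cdot)$; pulling $U$ back along the segment forces $\gamma^{-1}(U)=\{b\}$, which is not open in $[a,b]$. Your alternative sketch at the end is exactly this argument, phrased slightly differently (a nearby point on the segment lands in $U$ at smaller distance). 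The direct route has the advantage of being self-contained and not relying on Theorem~\ref{t1}; the paper's route illustrates how Theorem~\ref{t1} can be used as a criterion. Both hinge on the same computation $d(u,\gamma(t))=t-a$.
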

\begin{proof}
If possible, suppose that the metric $d$ is not round for $X$. By Theorem \ref{t1}, there exists an open set $U$ and $x\in X\setminus U$ such that the map $d(x,\cdot):U\rightarrow \mathbb{R}$ has the minimum $d(x,y)$ for $y\in U$. Then $x\neq y$, and by the hypothesis, there exists a closed interval $[a,b]$ in real line and an isometry $\gamma:[a,b]\rightarrow X$, such that $\gamma(a)=x$, $\gamma(b)=y$. Since $\gamma$ is an isometry, we have  $d(x,y)=d(\gamma(a),\gamma(b))=b-a>0$. By continuity of $\gamma$, the inverse image $\gamma^{-1}(U)$ is an open set containing $b$, and for all $t\in \gamma^{-1}(U)\subseteq [a,b]$, we have
\begin{eqnarray*}
 b-a=d(x,y)\leq d(x,\gamma(t))=d(\gamma(a),\gamma(t))=t-a\leq b-a,
\end{eqnarray*}
and so, $t=b$. Consequently, $\gamma^{-1}(U)=\{b\}$ which is not open in $[a,b]$. This contradicts the continuity of $\gamma$.
\end{proof}
The fundamental theorem of metric convexity together with Theorem \ref{t2} implies that every complete convex metric space is metrically round. This result has been proved earlier in \cite{wong,TH}, however, our approach is different.

In \cite{JSTD2020}, Singh and Narang obtained the following characterization of  sleek metric spaces, which will be used in the sequel.
\begin{thmx}\label{thA}
A metric space  $(X,d)$ is metrically sleek if and only if for any $x\in X$ and $r>0$, each $y\in B_d[x,r]$ satisfying $d(x,y)=r$ is a limit point of the set $X\setminus B_d[x,r]$.
\end{thmx}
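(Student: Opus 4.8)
The plan is to unwind the definitions and isolate the single topological fact that does all the work. I would first note that for every $x\in X$ and $r>0$ the inclusion $B_d(x,r)\subseteq B_d^\circ[x,r]$ always holds, since $B_d(x,r)$ is an open subset of $B_d[x,r]$. Hence $(X,d)$ is sleek if and only if the reverse inclusion $B_d^\circ[x,r]\subseteq B_d(x,r)$ holds for all $x$ and $r$, which in turn is equivalent to saying that no point $y$ with $d(x,y)=r$ is an interior point of $B_d[x,r]$. The reformulation I would record next is this: a point $y\in B_d[x,r]$ fails to be an interior point of $B_d[x,r]$ exactly when every open ball $B_d(y,\epsilon)$ meets $X\setminus B_d[x,r]$; and since $y$ itself lies in $B_d[x,r]$, hence not in $X\setminus B_d[x,r]$, this is precisely the assertion that $y$ is a limit point of $X\setminus B_d[x,r]$.

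For the forward implication I would assume $d$ is sleek and take any $x\in X$, $r>0$, and $y\in B_d[x,r]$ with $d(x,y)=r$. Then $y\notin B_d(x,r)=B_d^\circ[x,r]$, so $y$ is not an interior point of $B_d[x,r]$, and the reformulation above immediately gives that $y$ is a limit point of $X\setminus B_d[x,r]$.

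For the converse I would assume the stated limit-point condition and fix $x\in X$, $r>0$; it then suffices to prove $B_d^\circ[x,r]\subseteq B_d(x,r)$. Given $y\in B_d^\circ[x,r]$, certainly $d(x,y)\leq r$. If $d(x,y)=r$, the hypothesis makes $y$ a limit point of $X\setminus B_d[x,r]$, so every neighborhood of $y$ meets the complement of $B_d[x,r]$, contradicting $y\in B_d^\circ[x,r]$. Hence $d(x,y)<r$, that is, $y\in B_d(x,r)$, as required.

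I do not expect any real obstacle: the whole statement is careful bookkeeping with the notions of \emph{interior point} and \emph{limit point}. The only genuinely substantive observation is that sleekness can be obstructed only by points of the metric sphere $\{y:d(x,y)=r\}$, together with the translation of ``$y$ is not an interior point of the closed ball'' into ``$y$ is a limit point of the complement of the closed ball.'' This is dual in spirit to Theorem \ref{t1}, where non-roundness was detected by an open set on which $d(x,\cdot)$ attains a minimum.
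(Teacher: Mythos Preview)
Your proof is correct and is the natural argument: the whole content is the observation that sleekness can only fail at sphere points, together with the equivalence ``$y$ is not an interior point of $B_d[x,r]$'' $\Leftrightarrow$ ``$y$ is a limit point of $X\setminus B_d[x,r]$'' (valid because $y\in B_d[x,r]$). Note, however, that the present paper does not give its own proof of this statement: Theorem~\ref{thA} is quoted from \cite{JSTD2020} and used as a tool, so there is no in-paper proof to compare against; your argument is presumably what appears in that reference, as there is essentially only one way to unwind the definitions here.
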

The following result presents an analogue of Theorem \ref{t1} for non-sleek metrics.
\begin{theorem}\label{th4}
Let $(X,d)$ be a metric space. The metric $d$ is not sleek for $X$ if and only if there exists $x\in X$ and an open set $U$ containing $x$, such that the map $d(x,\cdot):U\rightarrow \mathbb{R}$ has a maximum.
\end{theorem}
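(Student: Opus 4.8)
The plan is to prove this as the exact mirror of Theorem~\ref{t1}, with the sleekness criterion of Theorem~\ref{thA} playing the role that the elementary description of ball closures played there. I would argue the two implications in turn.

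For the ``only if'' direction, assume $d$ is not sleek. By Theorem~\ref{thA} there exist $x\in X$, $r>0$ and $y\in B_d[x,r]$ with $d(x,y)=r$ such that $y$ is \emph{not} a limit point of $X\setminus B_d[x,r]$. Since $y$ itself lies in $B_d[x,r]$, being a non-limit-point forces the existence of some $\epsilon>0$ with $B_d(y,\epsilon)\cap(X\setminus B_d[x,r])=\emptyset$, that is, $B_d(y,\epsilon)\subseteq B_d[x,r]$. I would then set $U:=B_d(x,r)\cup B_d(y,\epsilon)$. This set is open, and it contains $x$ because $r>0$; moreover every $z\in U$ satisfies $d(x,z)<r$ when $z\in B_d(x,r)$ and $d(x,z)\le r$ when $z\in B_d(y,\epsilon)\subseteq B_d[x,r]$, while $y\in U$ attains $d(x,y)=r$. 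Hence $d(x,\cdot):U\rightarrow\mathbb{R}$ has maximum value $r$, attained at $y$.

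For the ``if'' direction, suppose there are $x\in X$ and an open $U$ with $x\in U$ on which $d(x,\cdot)$ attains a maximum, say at $y\in U$, and put $r:=d(x,y)$. Every $z\in U$ has $d(x,z)\le r$, so $U\subseteq B_d[x,r]$; thus $U$ is an open neighbourhood of $y$ contained in the closed ball, which gives $y\in B^\circ_d[x,r]$, while $d(x,y)=r$ gives $y\notin B_d(x,r)$. Therefore $B^\circ_d[x,r]\ne B_d(x,r)$ and $d$ is not sleek. Note that this argument also covers the degenerate case $r=0$: then $U=\{x\}$, so $x$ is an isolated point and $B^\circ_d[x,0]=\{x\}\ne\emptyset=B_d(x,0)$, the sleek analogue of the corollary on isolated points. (Alternatively one may invoke Theorem~\ref{thA} directly, since for $r>0$ the set $U$ witnesses that $y$ is not a limit point of $X\setminus B_d[x,r]$.)

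I do not expect a genuine obstacle here; the argument is short. The two points that need a moment of care are, first, in the ``only if'' direction, replacing $B_d(y,\epsilon)$ by the union $B_d(x,r)\cup B_d(y,\epsilon)$ so that the resulting open set actually contains $x$ — note the contrast with Theorem~\ref{t1}, where non-roundness is detected by an open set \emph{not} containing $x$ — and, second, the small amount of bookkeeping around $r=0$ in the converse.
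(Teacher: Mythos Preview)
Your proof is correct and follows essentially the same approach as the paper. The only cosmetic difference is in the ``only if'' direction: where you take $U=B_d(x,r)\cup B_d(y,\epsilon)$, the paper uses the slightly larger set $U=B_d^\circ[x,r]$ (the interior of the closed ball itself), which automatically contains both $x$ and $y$ once $B_d(y,\epsilon)\subseteq B_d[x,r]$ is established; your explicit handling of the degenerate case $r=0$ in the converse is a detail the paper leaves implicit.
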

\begin{proof}Suppose there exists $x\in X$ and an open set $U$ containing $x$ such that
$d(x,y)=\max\{d(x,z)~|~z\in U\}$ for some $y\in U$.
Then $U\subseteq B_d[x,d(x,y)]$, and since $y\in U$, we have $y\in B_d^\circ[x,d(x,y)]$. Consequently, $y$ is not a limit point of the set $X\setminus B_d[x,d(x,y)]$. So, by Theorem \ref{thA}, $d$ is not a sleek metric for $X$.

Conversely, if $d$ is not a sleek metric for $X$, then in view of Theorem \ref{thA}, there exists a pair of distinct points $u$ and $v$ in $X$ for which $v$ is not a limit point of the set $X\setminus B_d[u,d(u,v)]$. Also, since $v\not\in (X\setminus B_d[u,d(u,v)]$, we must have $v\not\in \overline{X\setminus B_d[u,d(u,v)]}$. Consequently, there exists an $\epsilon>0$, such that $B_d(v,\epsilon)\cap (X\setminus B_d[u,d(u,v)])=\emptyset$. Thus, $B_d(v,\epsilon)\subseteq B_d[u,d(u,v)]$, and so, $v\in B_d^\circ[u,d(u,v)]$. Finally, the map $d(u,\cdot)$ attains its maximum value $d(u,v)$ on the open set $B_d^\circ[u,d(u,v)]$ containing $u$, and so, the converse holds.
\end{proof}
\begin{corollary}
\begin{enumerate}
    \item[(a)] \label{th0}  A metrizable space with at least two points and having an isolated point is not  sleek.
    \item[(b)] \label{th1}    A compact metrizable space having at least two points is not  sleek.
\end{enumerate}
\end{corollary}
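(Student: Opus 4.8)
The plan is to obtain both parts directly from Theorem \ref{th4} (invoking Theorem \ref{thA} where it is more convenient), in parallel with the way the corresponding corollary for round metrics followed from Theorem \ref{t1}. Since ``$X$ is not sleek'' means that \emph{no} metric inducing the topology of $X$ is sleek, and since ``having an isolated point'' and ``compactness'' are topological properties, I would fix an arbitrary metric $d$ compatible with the topology of $X$ and, in each case, exhibit the configuration that Theorem \ref{thA} forbids for sleek metrics.

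For part (a), let $y$ be an isolated point, so that $\{y\}$ is open in $X$. Since $X$ has at least two points I can pick $x\neq y$ and set $r=d(x,y)>0$. Then $y\in B_d[x,r]$ with $d(x,y)=r$; but an isolated point is a limit point of no subset of $X$ whatsoever, so $y$ is not a limit point of $X\setminus B_d[x,r]$, and Theorem \ref{thA} shows that $d$ is not sleek. (Alternatively, and just as quickly: $\{y\}\subseteq B_d[x,r]$ is open, hence $y\in B_d^\circ[x,r]$, whereas $y\notin B_d(x,r)$, so the interior of this closed ball properly contains the corresponding open ball; one may also package this through Theorem \ref{th4} using $U=B_d^\circ[x,r]$.)

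For part (b), I would fix any $x\in X$ together with a point $q\neq x$. The map $d(x,\cdot):X\to\mathbb{R}$ is continuous and $X$ is compact, so it attains a maximum on $X$, say at $y$, and this maximum is at least $d(x,q)>0$. Thus $X$ is an open set containing $x$ on which $d(x,\cdot)$ has a positive maximum, and Theorem \ref{th4} yields that $d$ is not sleek. (Equivalently: with $r=d(x,y)>0$ one has $X\setminus B_d[x,r]=\emptyset$, so $y$ is not a limit point of it, and Theorem \ref{thA} applies.)

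This is routine once Theorems \ref{th4} and \ref{thA} are available, so I do not expect a real obstacle; the one point to watch---and it is the same in both parts---is that the maximum of $d(x,\cdot)$ featuring in Theorem \ref{th4} must be \emph{positive} (equivalently, attained at a point other than the center $x$), since Theorem \ref{thA}, on which that characterization rests, speaks only of balls of positive radius. Both arguments above are arranged precisely so that this maximum equals a genuine positive distance $d(x,y)$, and with that in place the conclusions are immediate.
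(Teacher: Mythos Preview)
Your argument is correct and, like the paper, reduces both parts to Theorem~\ref{th4}/Theorem~\ref{thA}. The one real difference is in part~(a): the paper takes the isolated point itself as the center, sets $U=\{x\}$, and invokes Theorem~\ref{th4} with the maximum $d(x,x)=0$; you instead take the isolated point $y$ on the sphere of a ball centered at some other point $x$, so that the relevant radius $r=d(x,y)$ is strictly positive. Your version is the more careful one, precisely for the reason you flag: Theorem~\ref{thA} (and hence the forward direction of Theorem~\ref{th4} as proved in the paper) is stated only for $r>0$, so the paper's zero-radius shortcut is at best implicit. For part~(b) the two proofs are essentially identical---the paper realizes the diameter of $X$ by compactness and then applies Theorem~\ref{th4} with $U=X$, exactly as you do after fixing a center.
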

\begin{proof}
(a)    Let the metric $d$ induces the topology of the metrizable space $X$. If $x\in X$ is an isolated point, then the set $U=\{x\}$ is open, and so, the map $d(x,\cdot):U\rightarrow\mathbb{R}$ has the maximum value $d(x,x)=0$.

(b) Let the metric $d$ induces the topology of the compact metrizable space $X$. Since $X$ is compact and have at least two points, there exists a pair of distinct points  $x$ and $y$ in $X$, such that $d(x,y)=\sup\{d(u,v)~|~u,v\in X\}$. So, the map $d(x,\cdot):X\rightarrow \mathbb{R}$ has the maximum value $d(x,y)$.
\end{proof}
A compact metrizable space having no isolated point may or may not be metrically round.
An open or dense subspace of a  sleek metric space is   sleek  (see \cite{TH}).
The following result shows that for real line with the usual metric, the class of metrically sleek subspaces is contained in the class of metrically round subspaces.
\begin{theorem}\label{c2}
Let $X$ be a subspace of real line. If $\rho_1$  is a sleek metric for $X$, then $\rho_1$ is a round metric for $X$.
\end{theorem}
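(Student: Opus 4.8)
The plan is to prove the contrapositive: assuming $\rho_1$ is sleek for $X$ but not round for $X$, I will exhibit a closed ball in $X$ together with a boundary point of it that violate the sleekness criterion of Theorem~\ref{thA}, a contradiction. By Theorem~\ref{t1}, the failure of roundness supplies an open set $U\subseteq X$ and a point $x\in X\setminus U$ such that $\rho_1(x,\cdot)$ attains its minimum $m:=\rho_1(x,y)>0$ over $U$ at some $y\in U$. Since $X\subseteq\mathbb{R}$, either $y=x+m$ or $y=x-m$, and by the obvious left--right symmetry I may assume $y=x+m$. Shrinking a basic open neighborhood of $y$ in $X$, fix $\delta\in(0,m)$ with $(y-\delta,y+\delta)\cap X\subseteq U$; any point of $(y-\delta,y)\cap X$ would then lie in $U$ at distance $<m$ from $x$, so the structural fact to be exploited is the one-sided gap $(y-\delta,y)\cap X=\emptyset$, together with $x<y$.

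The heart of the argument is to invoke Theorem~\ref{thA} twice. First apply it to the closed ball $B_{\rho_1}[x,m]$, whose boundary contains $y$: sleekness forces $y$ to be a limit point of $X\setminus B_{\rho_1}[x,m]=\bigl(X\cap(y,\infty)\bigr)\cup\bigl(X\cap(-\infty,x-m)\bigr)$. The second set lies at distance at least $2m$ from $y$, so $y$ must accumulate from the right; in particular there is some $y'\in X$ with $y<y'<y+\delta$. Now recenter and apply Theorem~\ref{thA} to $B_{\rho_1}[y',\,y'-y]$, whose boundary also contains $y$: this forces $y$ to be a limit point of $X\setminus B_{\rho_1}[y',\,y'-y]=\bigl(X\cap(2y'-y,\infty)\bigr)\cup\bigl(X\cap(-\infty,y)\bigr)$. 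The first set lies at distance at least $2(y'-y)>0$ from $y$, so $y$ must accumulate from the left, which the gap $(y-\delta,y)\cap X=\emptyset$ forbids. This contradiction settles the case $y=x+m$, and the case $y=x-m$ follows by interchanging the roles of ``left'' and ``right''.

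The one genuine obstacle is that a single application of Theorem~\ref{thA} does not suffice: the gap produced by non-roundness lies on the side of $y$ facing $x$, whereas the sleekness criterion for the ball centered at $x$ only constrains $X$ on the far side of $y$. The device that gets around this is to spend the first application of sleekness constructing a witness point $y'$ of $X$ just beyond $y$ on that far side, and then to recenter the criterion at $y'$, which pushes the uninformative ``far'' part of the complement out of range and leaves the gap as the operative contradiction. Beyond that, the proof is only the elementary bookkeeping of which real numbers lie outside a prescribed closed interval.
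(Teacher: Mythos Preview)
Your proof is correct and follows essentially the same route as the paper: both derive a one-sided gap $(y-\delta,y)\cap X=\emptyset$ from the failure of roundness, use sleekness to locate a point of $X$ just to the right of $y$, and then recenter at that point to obtain a contradiction with the gap. The only cosmetic difference is that you invoke Theorem~\ref{thA} directly at both sleekness steps, whereas the paper uses the no-isolated-points corollary to find the right-hand point and then Theorem~\ref{th4} (with the open set $[y,2z-y)\cap X$) for the final contradiction.
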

\begin{proof} For any $p\in X$, and $r>0$, an open ball centered at $p$ and radius $r$ in $X$ is the set $(p-r,p+r)\cap X$, and the corresponding closed ball is $[p-r,p+r]\cap X$.

Assume that $\rho_1$ is not round for $X$. Then there exists a pair of distinct points $x$ and $y$ in $X$, such that $y\not\in \overline{(x-s,x+s)\cap X}\cap X$, where $s=\rho_1(x,y)>0$ and the bar denotes the closure in real line. Consequently, there exists an $\epsilon>0$, such that  $(y-\epsilon,y+\epsilon)\cap (x-s,x+s)\cap X=\emptyset$. Without loss of generality assume that $x<y$. Then $(y-\epsilon,y)\subset (x-s,x+s)$, and so, $(y-\epsilon,y)\cap X=\emptyset$. Thus $(y-\epsilon,y+\epsilon)\cap X=[y,y+\epsilon)\cap X$. By the hypothesis, $X$ is metrically sleek. So, by Corollary \ref{th0}, $y$ is not an isolated point of $X$. Consequently, we can choose a point $z\in (y,y+\epsilon/2)\cap X$. Then $y,z\in U=[y,2z-y)\cap X$, where $U$ is an open subset of $X$ such that the function $\rho_1(z,\cdot):U\rightarrow\mathbb{R}$ has the maximum value $z-y$. This contradicts Theorem \ref{th4}.
\end{proof}
On the other hand, there always exists a subspace $X$ of the Euclidean space $(\mathbb{R}^n,\rho_n)$ for $n>1$,  such that the metric $\rho_n$ is sleek but not round for $X$. An explicit example is the subspace  $\mathbb{R}\times \{1,2,\ldots,n\}$ of $\mathbb{R}^n$, $n\geq 2$.
\begin{theorem}\label{th2.6}
    For an index set $J$, let $\{A_\alpha\}_{\alpha\in J}$ be a family of subspaces of a metric space $(X,d)$ such that the metric $d$ is sleek for the subspace $A_\alpha\cup A_\beta$ for all $\alpha, \beta\in J$. Then the metric $d$ is sleek for the subspace $\cup_{\alpha\in J}A_\alpha$.
\end{theorem}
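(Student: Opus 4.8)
The plan is to reduce non-sleekness of the union to non-sleekness of one of the pairs $A_\alpha\cup A_\beta$, exploiting the fact that the obstruction supplied by Theorem \ref{th4} is witnessed by just two points. Write $A=\bigcup_{\alpha\in J}A_\alpha$ and argue by contradiction: assume the metric $d$ is not sleek for $A$.

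By Theorem \ref{th4} there exist a point $x\in A$ and an open subset $U$ of $A$ with $x\in U$ such that $d(x,\cdot)\colon U\to\mathbb{R}$ attains a maximum, say $d(x,y)=\max\{d(x,z)\mid z\in U\}$ for some $y\in U$. I would then pick indices $\alpha,\beta\in J$ with $x\in A_\alpha$ and $y\in A_\beta$, set $B=A_\alpha\cup A_\beta$, and pass to the induced metric on $B$. The set $U\cap B$ is open in $B$ (since $B\subseteq A$ and $U$ is open in $A$), and it contains both $x$ (as $x\in A_\alpha\cap U$) and $y$ (as $y\in A_\beta\cap U$); moreover, because $U\cap B\subseteq U$, the restriction $d(x,\cdot)\colon U\cap B\to\mathbb{R}$ still attains its maximum, again at $y$. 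Hence, by Theorem \ref{th4} applied to $(B,d)$, the metric $d$ is not sleek for $B=A_\alpha\cup A_\beta$, contradicting the hypothesis. Therefore $d$ is sleek for $A$.

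The argument has no real obstacle; the only points needing (routine) care are that intersecting $U$ with $B$ preserves openness in the subspace $B$, keeps the two relevant points inside it, and cannot increase --- hence cannot destroy --- the maximum of $d(x,\cdot)$. A nearly identical localization can be run through Theorem \ref{thA} instead, avoiding the argument by contradiction: given $x\in A$, $r>0$ and $y\in B_d[x,r]$ with $d(x,y)=r$, choose $\alpha,\beta$ with $x\in A_\alpha$ and $y\in A_\beta$; sleekness of $A_\alpha\cup A_\beta$ forces $y$ to be a limit point of $(A_\alpha\cup A_\beta)\setminus B_d[x,r]$, which is contained in $A\setminus B_d[x,r]$, so $y$ is a limit point of $A\setminus B_d[x,r]$, exactly the criterion of Theorem \ref{thA} for $d$ to be sleek on $A$.
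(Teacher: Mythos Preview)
Your proof is correct and follows essentially the same approach as the paper: both argue by contradiction via Theorem \ref{th4}, locate the two witnessing points $x$ and $y$ in some $A_\alpha$ and $A_\beta$, and intersect the open set with $A_\alpha\cup A_\beta$ to contradict sleekness of that pair. The additional direct argument you sketch via Theorem \ref{thA} is a nice alternative not present in the paper.
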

\begin{proof}
Let $A=\cup_{\alpha\in J}A_\alpha$. Suppose the contrary that $d$ is not sleek for $A$. Then by Theorem \ref{th4}, there exists $x\in A$ and an open set $U$ containing $x$ such that the map $d(x,\cdot):U\rightarrow \mathbb{R}$ has maximum $d(x,y)$ for $y\in U$. As $U$ is open in $A$, we have $U=O\cap A$ for some open subset $O$ of $X$. Since $x,y\in O\cap A=\cup_{\alpha\in J}(O\cap A_\alpha)$, there exist $\alpha,\beta\in J$ such that $x\in A_\alpha$ and $y\in A_\beta$.  Then $x,y\in O\cap (A_\alpha\cup A_\beta)$, such that $d(x,y)=\sup\{d(x,z)~|~z\in A_\alpha\cup A_\beta\}$, which in view of Theorem \ref{th4} shows that the metric $d$ is not sleek for the subspace $A_\alpha\cup A_\beta$. This contradicts the hypothesis.
\end{proof}
For sleek metric spaces, Theorems \ref{th2}, \ref{th6}, and \ref{th3} are analogues of Theorems \ref{Na2}, \ref{Na3}, and \ref{Na4}, respectively.
\begin{theorem}\label{th2}
Let $(X,d_1)$ and $(Y,d_2)$ be metric spaces. Let $f:X\rightarrow Y$ be a surjection such that for $x,y,z,\in X$, if $d_1(x,z)>d_1(x,y)$, then $d_2(f(x),f(z))>d_2(f(x),f(y))$. If $d_1$ is a sleek metric for $X$, then $d_2$ is a sleek metric for $Y$.
 \end{theorem}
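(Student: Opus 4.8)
The plan is to argue contrapositively, using the characterization of non-sleek metrics in Theorem~\ref{th4}. Assume $d_2$ is not sleek for $Y$. Then there are $y_0\in Y$, an open set $V$ in $Y$ with $y_0\in V$, and a point $y_1\in V$ with $d_2(y_0,y_1)=\max\{d_2(y_0,w)~|~w\in V\}$. The goal is to manufacture from this data an open subset $U$ of $X$ and a point $x_0\in U$ such that $d_1(x_0,\cdot):U\rightarrow\mathbb{R}$ attains a maximum, and then apply Theorem~\ref{th4} to $(X,d_1)$ to contradict the sleekness of $d_1$.

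First I would record that $f$ is injective: taking $x=y$ in the hypothesis, $d_1(x,z)>0$ forces $d_2(f(x),f(z))>0$, so distinct points of $X$ have distinct images; hence $f$ is a bijection. Choose $x_0,x_1\in X$ with $f(x_0)=y_0$, $f(x_1)=y_1$; since $y_0\ne y_1$ (otherwise $V=\{y_0\}$, making $y_0$ isolated), we get $R:=d_1(x_0,x_1)>0$. Now take any $z\in f^{-1}(V)$. Because $f(z)\in V$ and $y_1$ maximizes $d_2(y_0,\cdot)$ over $V$, we have $d_2(f(x_0),f(z))\le d_2(f(x_0),f(x_1))$, and the contrapositive of the hypothesis applied with $x=x_0$ and $y=x_1$ gives $d_1(x_0,z)\le d_1(x_0,x_1)=R$. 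Thus $f^{-1}(V)\subseteq B_{d_1}[x_0,R]$, while $x_0,x_1\in f^{-1}(V)$ with $d_1(x_0,x_1)=R$. Consequently, on $U:=f^{-1}(V)$ the map $d_1(x_0,\cdot)$ attains its maximum value $R$ at $x_1$, and $x_0\in U$.

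The only delicate step — and the one I expect to be the real obstacle — is to show that $U=f^{-1}(V)$ is open in $X$, i.e.\ that $f$ is continuous; granting that, Theorem~\ref{th4} finishes the argument. For continuity, the plan is: if $f$ were discontinuous at some $a\in X$, there would be $\eta>0$ and a sequence $x_n\to a$ with $x_n\ne a$ and $d_2(f(a),f(x_n))\ge\eta$ for all $n$; then for any fixed $b\ne a$ and all $n$ large enough that $d_1(a,x_n)<d_1(a,b)$, the hypothesis (with $x=a$, $z=b$, $y=x_n$) gives $d_2(f(a),f(b))>d_2(f(a),f(x_n))\ge\eta$, so $d_2(f(a),w)>\eta$ for every $w\ne f(a)$ by surjectivity of $f$, forcing $f(a)$ to be an isolated point of $Y$. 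As $Y$ has no isolated point, $f$ is continuous, $U$ is open, and applying Theorem~\ref{th4} to $(X,d_1)$ with the open set $U\ni x_0$ on which $d_1(x_0,\cdot)$ has the maximum $R$ shows that $d_1$ is not sleek for $X$, contradicting the hypothesis. Everything apart from the openness of $f^{-1}(V)$ is a routine unwinding of Theorems~\ref{thA} and~\ref{th4}, so that is the step where I would concentrate the care.
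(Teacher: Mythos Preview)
Your contrapositive route via Theorem~\ref{th4} is a legitimate alternative to the paper's direct argument. The paper picks distinct points $f(x),f(x')\in Y$, uses Theorem~\ref{thA} in $X$ to find a sequence $z_n\to x'$ with $d_1(x,z_n)>d_1(x,x')$, pushes it forward by continuity of $f$, and then applies the monotonicity hypothesis to conclude that $f(x')$ is a limit point of $Y\setminus B_{d_2}[f(x),d_2(f(x),f(x'))]$. You instead pull the witnessing open set $V$ back to $U=f^{-1}(V)$ and apply Theorem~\ref{th4} in $X$. Both strategies reduce to the continuity of $f$, which you correctly isolate as the crux.

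The gap is precisely there. Your argument validly shows that if $f$ were discontinuous at $a$ then $f(a)$ would be isolated in $Y$; but the next clause, ``As $Y$ has no isolated point,'' is neither a hypothesis of Theorem~\ref{th2} nor something you have established. Sleekness of $d_1$ tells you that $X$ has no isolated points, but nothing you have written transfers this to $Y$ from the one-sided monotonicity hypothesis on $f$. (Your earlier parenthetical dismissing the case $y_0=y_1$ leans on the same unproved assertion; that minor point is easily repaired, since the \emph{proof} of Theorem~\ref{th4} actually produces distinct $u$ and $v$.) The paper does not prove continuity of $f$ either---it simply cites \cite{Na}, whose corresponding statement (Theorem~\ref{Na2} above) carries the explicit hypothesis that \emph{both} spaces are without isolated points---so your attempt in fact goes further than the paper's one-line citation. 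But as written, your continuity argument still has a hole at exactly the spot you predicted would need the most care.
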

\begin{proof}  The map $f$ is a homeomorphism of $X$ onto $Y$ (see \cite{Na}). So any two distinct points of $Y$ are of the form $f(x)$ and $f(x')$ for  distinct points $x$ and $x'$ in $X$. Since $d_1$ is a sleek metric for $X$, in view of Theorem \ref{thA}, the point $x'$ is a limit point of the set $X\setminus B_{d_1}[x,d_1(x,x')]$. So, there exists a sequence $\{z_n\}$ of points of $X\setminus B_{d_1}[x,d_1(x,x')]$ converging to $x'$. By  the continuity of $f$, the sequence $\{f(z_n)\}$ in $Y$ converges to $f(x')$. Since for each natural number $n$, $z_n\not\in B_{d_1}[x,d_1(x,x')]$, we must have $d_1(x,z_n)>d_1(x,x')$.
So, by the hypothesis we have $d_2(f(x),f(z_n))>d_2(f(x),f(x'))$, which shows that $f(z_n)\not\in B_{d_2}[f(x),d_2(f(x),f(x'))]$. Consequently, $f(x')$ becomes a limit point of the set $Y\setminus B_{d_2}[f(x),d_2(f(x),f(x'))]$. By Theorem \ref{thA}, $d_2$ is a sleek metric for $Y$.
 \end{proof}
 The first two results in the following corollary have been obtained in \cite{Na} for   round metric spaces.
\begin{corollary}
\begin{enumerate}\label{c1}
\item[(a)] Let $d_1$ be a sleek metric for $X$. If $d_2$ is another metric on $X$ equivalent to $d_1$ such that $d_2(x,z)<d_2(x,y)$ whenever $d_1(x,z)<d_1(x,y)$, then $d_2$ is also a sleek metric for $X$.
\item[(b)]
    Let $d$ be a  sleek metric for $X$. Then there exists an equivalent bounded metric $d'$ on $X$ which is sleek for $X$.
\item[(c)]
    Let $f:(X,d_1)\rightarrow (Y,d_2)$ be a global isometry, that is, $f$ is a surjective map such that $d_2(f(x),f(y))=d_1(x,y)$. Then $d_1$ is a sleek metric for $X$ if and only if $d_2$ is a sleek metric for $Y$.
    \end{enumerate}
\end{corollary}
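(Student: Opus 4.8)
The plan is to derive all three parts from Theorem \ref{th2}, the sleek analogue of Nathanson's mapping theorem, so that the corollary needs no fresh argument with balls or limit points.

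For part (a), I would apply Theorem \ref{th2} to the identity map $f=\mathrm{id}_X:(X,d_1)\rightarrow(X,d_2)$. Since $d_1$ and $d_2$ are equivalent, $f$ is a surjective homeomorphism, so the structural hypotheses of Theorem \ref{th2} are satisfied. The distance hypothesis ``$d_2(x,z)<d_2(x,y)$ whenever $d_1(x,z)<d_1(x,y)$'' becomes, after interchanging the roles of $y$ and $z$, precisely ``$d_1(x,z)>d_1(x,y)\implies d_2(f(x),f(z))>d_2(f(x),f(y))$'', which is exactly the hypothesis Theorem \ref{th2} requires. Hence $d_2$ is sleek for $X$.

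For part (b), I would take the truncated metric $d'(x,y)=d(x,y)/\bigl(1+d(x,y)\bigr)$. I would verify the routine facts that $d'$ is a metric on $X$ (the triangle inequality follows because $\phi(t)=t/(1+t)$ is increasing and subadditive on $[0,\infty)$), that $d'\le 1$, and that $d'$ is equivalent to $d$ (since $\phi$ is a strictly increasing continuous bijection of $[0,\infty)$ onto $[0,1)$, every $d$-ball contains a $d'$-ball and conversely). The strict monotonicity of $\phi$ also gives $d(x,z)<d(x,y)\implies d'(x,z)<d'(x,y)$, so part (a) applies and $d'$ is sleek for $X$.

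For part (c), I would first note that a surjective global isometry $f:(X,d_1)\rightarrow(Y,d_2)$ is injective (as $d_2(f(x),f(x'))=0$ forces $d_1(x,x')=0$), hence bijective, and that $f$ and $f^{-1}$ are continuous since they preserve distances exactly; thus $f$ is a homeomorphism. The implication $d_1(x,z)>d_1(x,y)\implies d_2(f(x),f(z))=d_1(x,z)>d_1(x,y)=d_2(f(x),f(y))$ is immediate, so Theorem \ref{th2} yields that $d_1$ sleek implies $d_2$ sleek; applying the same argument to the global isometry $f^{-1}:(Y,d_2)\rightarrow(X,d_1)$ gives the converse. Everything here is standard bookkeeping; the only step needing any attention is the verification in part (b) that the truncated metric is a genuine metric topologically equivalent to $d$, together with lining up the inequality directions with the hypothesis of Theorem \ref{th2}, and I do not expect a genuine obstacle.
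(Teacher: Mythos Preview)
Your proposal is correct and follows essentially the same route as the paper: part~(a) via the identity map in Theorem~\ref{th2}, part~(b) via the bounded metric $d'(x,y)=d(x,y)/(1+d(x,y))$ together with part~(a), and part~(c) via the distance-preserving implication applied to $f$ and $f^{-1}$. The only cosmetic difference is that the paper verifies the monotonicity in~(b) by an explicit algebraic computation of $d'(x,y)-d'(x,z)$ rather than citing monotonicity of $t\mapsto t/(1+t)$.
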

\begin{proof}
To prove (a), we take $X=Y$ and $f$, the identity map of $X$.   To prove (b), let $d'(x,y)=d(x,y)/(1+d(x,y))$ for all $x,y\in X$. The metric $d'$ is bounded and is equivalent to $d$. If $d(x,z)<d(x,y)$, then we have
    \begin{eqnarray*}
    d'(x,y)-d'(x,z)&=&\frac{1}{(1+d(x,y))(1+d(x,z))}\{d(x,y)-d(x,z)\}>0.
    \end{eqnarray*}
By  (a), $d'$ is a sleek metric for $X$. Finally, (c) follows from the observation that $d_1(x,z)>d_1(x,y)$ for $x,y,z\in X$ if and only if $d_2(f(x),f(z))=d_1(x,z)>d_1(x,y)=d_2(f(x),f(y))$.
\end{proof}
\begin{theorem}\label{th6}
    Let $(X,d)$ be a metric space. Then there exists an equivalent bounded metric $d'$ on $X$ which is not sleek for $X$.
\end{theorem}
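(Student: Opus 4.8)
The plan is to use the standard truncation construction — replacing $d$ by $\min\{d,\,c\}$ for a suitable constant $c>0$ — and to choose the truncation level $c$ so that the distance from some fixed point attains a maximum on an open set; non-sleekness will then be immediate from Theorem \ref{th4}.

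Since $X$ has at least two points, I would fix distinct points $p_0,q_0\in X$ and put $c=d(p_0,q_0)>0$, and then define $d'(x,y)=\min\{d(x,y),\,c\}$ for all $x,y\in X$. First I would verify that $d'$ is a metric: symmetry and the identity-of-indiscernibles condition are clear, and the triangle inequality $\min\{d(x,z),c\}\le \min\{d(x,y),c\}+\min\{d(y,z),c\}$ follows by the routine case distinction according to whether $d(x,z)\le c$ and whether each of $d(x,y),d(y,z)$ exceeds $c$. The metric $d'$ is obviously bounded (by $c$). To see that $d'$ is equivalent to $d$, I would note that for every $x\in X$ and every $r$ with $0<r<c$ one has $B_{d'}(x,r)=B_d(x,r)$; hence the two metrics have the same open balls of sufficiently small radius and therefore induce the same topology on $X$.

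Finally, to show that $d'$ is not sleek for $X$, take $x=p_0$ and the open set $U=X$, which contains $p_0$. Since $d'\le c$ on all of $X$ while $d'(p_0,q_0)=\min\{c,c\}=c$, the map $d'(p_0,\cdot):X\to\mathbb{R}$ attains its maximum value $c$ at the point $q_0\in X$. By Theorem \ref{th4}, the metric $d'$ is not sleek for $X$, and the proof is complete. There is no serious obstacle here; the only slightly delicate point is the case-by-case verification of the triangle inequality for the truncated metric, which is entirely routine. (It is worth remarking that the truncation works so easily in the sleek setting precisely because non-sleekness only requires a distance function to have a maximum on an open set, and the whole space provides such a set once the truncation level is realized by an actual pair of points of $X$.)
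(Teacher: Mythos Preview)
Your proposal is correct and follows essentially the same route as the paper: the paper also defines $d'(x,y)=\min\{d(x,y),r\}$ (with $0<r<d(a,b)$ rather than your $c=d(p_0,q_0)$, an immaterial difference) and observes that $B_{d'}^\circ[a,r]=X\neq B_{d'}(a,r)$, which is exactly your Theorem~\ref{th4} argument unpacked. Your write-up is in fact slightly more detailed, since the paper simply cites \cite{Na} for the fact that the truncated metric is an equivalent bounded metric.
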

\begin{proof} As in \cite{Na}, for  $a$, $b\in X$, $a\neq b$ and $0<r<d(a,b)$, we let
$d'(x,y)=\min\{d(x,y),r\}$ for all $x,y\in X$. Then $d'$ is a bounded and is equivalent to $d$.
We observe that $B_{d'}^\circ[a,r]=X\neq B_{d'}(a,r)$, since $b\not\in B_{d'}(a,r)$.
So, $d'$ is not a sleek metric for $X$.
\end{proof}
\begin{theorem}\label{th3}
Let $\{(X_k,d_k)\}_{k=1}^\infty$ be a countable family of  metric spaces, where $\text{diam}(X_k)<\infty$ for all but finitely many $k$.
Let $X=\prod_{k}X_k$. If the metric $d_k$ is sleek for $X_k$ for all $k$, then the metric  $D$ as defined in Theorem \ref{Na4} is sleek for $X$. The converse is not true.
\end{theorem}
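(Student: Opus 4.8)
The plan is to prove the positive (``if'') implication via the sleekness criterion of Theorem~\ref{thA} together with a single-coordinate perturbation, and to refute the converse by an explicit product.

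\emph{Positive direction.} Assume each $d_k$ is sleek for $X_k$. By Theorem~\ref{thA} it suffices to fix $x=(x_k)$, $y=(y_k)\in X$ and $r>0$ with $D(x,y)=r$, and to show $y$ is a limit point of $X\setminus B_D[x,r]$. First I would note that $r=\sum_k d_k(x_k,y_k)/(\lambda_k2^k)>0$ forces some index $k_0$ with $s:=d_{k_0}(x_{k_0},y_{k_0})>0$, so $y_{k_0}$ lies on the sphere of radius $s$ about $x_{k_0}$ in $X_{k_0}$. Applying Theorem~\ref{thA} to the sleek metric $d_{k_0}$ gives a sequence $w_n\to y_{k_0}$ in $X_{k_0}$ with $w_n\neq y_{k_0}$ and $d_{k_0}(x_{k_0},w_n)>s$ for all $n$. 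Let $z^{(n)}\in X$ agree with $y$ in every coordinate except $z^{(n)}_{k_0}=w_n$; only the $k_0$-th summand of $D$ changes, and it increases strictly, so
\[
D(x,z^{(n)})=\left(\sum_{k\neq k_0}\frac{d_k(x_k,y_k)}{\lambda_k2^k}\right)+\frac{d_{k_0}(x_{k_0},w_n)}{\lambda_{k_0}2^{k_0}}>D(x,y)=r,
\]
that is, $z^{(n)}\in X\setminus B_D[x,r]$, while $D(y,z^{(n)})=d_{k_0}(y_{k_0},w_n)/(\lambda_{k_0}2^{k_0})\to 0$ and $z^{(n)}\neq y$. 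Hence $y$ is a limit point of $X\setminus B_D[x,r]$, and Theorem~\ref{thA} yields that $D$ is sleek for $X$.

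\emph{The converse.} I would take $X=\mathbb{R}\times\{0,1\}$ with $X_1=\mathbb{R}$ (sleek, $\lambda_1=1$) and $X_2=\{0,1\}\subset\mathbb{R}$, which is compact with more than one point and hence not sleek (see the corollary to Theorem~\ref{th4}); then $D(a\times b,a'\times b')=|a-a'|/2+|b-b'|/4$. That $D$ is sleek for $X$ would follow once more from Theorem~\ref{thA}: given a center $a\times b$ and a point $a'\times b'$ with $D(a\times b,a'\times b')=r>0$, the points $(a'+\varepsilon\sigma)\times b'$ --- where $\sigma\in\{\pm1\}$ is chosen so that $a'+\varepsilon\sigma$ moves away from $a$ (always possible in $\mathbb{R}$) and $\varepsilon\downarrow0$ --- lie arbitrarily close to $a'\times b'$ and satisfy $D(a\times b,(a'+\varepsilon\sigma)\times b')=r+\varepsilon/2>r$, so $a'\times b'$ is a limit point of $X\setminus B_D[a\times b,r]$. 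Thus $D$ is sleek for $X$ although $d_2$ is not sleek for $X_2$, which disproves the converse; a genuinely countable family results from appending $X_k=\{0\}$ for $k\ge3$.

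I do not expect a real obstacle in the positive direction: the only point requiring care is the bookkeeping that a perturbation of one coordinate is automatically a $D$-small move affecting exactly one summand of $D$. The conceptual content lies in the converse --- in contrast to the round case of Theorem~\ref{Na4}, which is an ``if and only if'', sleekness of $D$ does \emph{not} descend to the factors, because a factor with enough room to push points outward (here $\mathbb{R}$) can repair the failure of sleekness in another factor.
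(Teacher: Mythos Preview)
Your forward direction is essentially the paper's proof: both pick an index $i$ with $x_i\neq y_i$, invoke Theorem~\ref{thA} in the sleek factor $X_i$ to obtain a sequence approaching $y_i$ from outside $B_{d_i}[x_i,d_i(x_i,y_i)]$, and lift it to $X$ by perturbing only the $i$-th coordinate.

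For the converse the two arguments diverge. The paper takes $X_1=\{0,1\}$ and $X_n=\mathbb{R}$ with the bounded metric $|u-v|/(1+|u-v|)$ for all $n\ge 2$, and then verifies $B_D^\circ[a,r]=B_D(a,r)$ by a direct case analysis on $r$ (three ranges of $r$, with further subcases). Your example $\mathbb{R}\times\{0,1\}$ together with the one-line perturbation in the $\mathbb{R}$-coordinate is cleaner, and it is precisely the mechanism behind the paper's subsequent Theorem~\ref{th3G} (a single sleek factor already forces $D$ to be sleek). There is, however, a small technical slip in your last sentence: appending $X_k=\{0\}$ for $k\ge 3$ gives $\lambda_k=\operatorname{diam}(X_k)=0$, so the defining formula~\eqref{ee1} for $D$ divides by zero. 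Replace the singletons by, say, copies of $\{0,1\}$ (or by $\mathbb{R}$ with a bounded metric, as the paper does); your first-coordinate perturbation argument is unaffected, since the remaining coordinates of $y$ are simply carried along unchanged.
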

\begin{proof} Assume that the metric $d_k$ is sleek for $X_k$ for all $k$. Let $x=(x_k)$ and $y=(y_k)$ be any two distinct points of $X$. Then there exists an index $i$ for which $x_i\neq y_i$. Since the metric $d_i$ is sleek for $X_i$, in view of Theorem \ref{thA}, we have a sequence $\{z_i^{n}\}_{n=1}^{\infty}$ of points of $X_i$ such that
\begin{eqnarray*}
\lim_{n\rightarrow \infty}d_i(z_i^{n},y_i)=0;~d_i(x_i,z_i^{n})>d_i(x_i,y_i),~\text{for all}~n\geq 1.
\end{eqnarray*}
Taking $\xi_k^n=y_k$ if $k\neq i$, and $\xi_k^n=z_i^n$ if $k=i$, we observe that
$\{\xi^{n}\}$ is the sequence of points of $X$ converging to $y$, since $D(\xi^{n},y)=d_i(z_i^{n},y_i)/(\lambda_i2^i)\rightarrow 0$ as $n\rightarrow \infty$. Also,
\begin{eqnarray*}
D(x,\xi^{n}) &=& {d_i(x_i,z_i^{n})}/{(\lambda_i 2^i)}+\sum_{k=1, k\neq i}^\infty {d_k(x_k,y_k)}/{(\lambda_k 2^k)}\\
&=&\{(d_i(x_i,z_i^{n})-d_i(x_i,y_i))\}/(\lambda_i 2^i)+ D(x,y),
\end{eqnarray*}
which shows that $D(x,\xi^{n})>D(x,y)$; and so,  $\xi^n\in X\setminus B_D[x,D(x,y)]$ for all $n$. Thus,  $y$ is a limit point of the set $X\setminus B_D[x,D(x,y)]$. By Theorem \ref{thA},  $D$ is a sleek metric for $X$.

To show that the converse need not be true, we take the product space $X=\prod_{n=1}^\infty (X_n,d_n)$, where
\begin{eqnarray*}
X_1=\{0,1\};~d_1=\rho_1;~X_n=\mathbb{R};~d_n(x_n,y_n)=\frac{|x_n-y_n|}{1+|x_n-y_n|},~x_n,y_n\in X_n, n\geq 2.
\end{eqnarray*}
We show that the metric $D$ as defined in Theorem \ref{Na4} is sleek for the product space $X$. To proceed, we observe that
$\text{diam}(X_n)=1$ for all $n$ so that in view of \eqref{ee1} we have for all $x=(x_n)$, $y=(y_n)$ in $X$   that
\begin{eqnarray}\label{ee2}
D(x,y) &=& \sum_{n=1}^\infty{d_n(x_n,y_n)}{2^{-n}}=\begin{cases}  \sum_{n=2}^\infty\frac{|x_n-y_n|2^{-n}}{1+|x_n-y_n|}, &~\text{if}~x_1=y_1;\\
\frac{1}{2}+\sum_{n=2}^\infty\frac{|x_n-y_n|2^{-n}}{1+|x_n-y_n|}, &~\text{if}~x_1\neq y_1.
\end{cases}
\end{eqnarray}
Consequently, for any point $a=(a_n)\in X$ and $r>0$, we have the following:
\begin{eqnarray*}
B_{D}[a,r]=\begin{cases} A_{r,a_1}, &~\text{if}~0<r<(1/2);\\
A_{r,a_1}\cup \{(1-a_1,a_2,a_3,\ldots)\},&~\text{if}~r=1/2;\\
A_{r,a_1}\cup A_{r-1/2,1-a_1},&~\text{if}~(1/2)<r,
\end{cases}
\end{eqnarray*}
where we have
\begin{eqnarray*}
A_{r,a_1}=\bigl\{(x_n)\in X~|~x_1=a_1,~\sum_{n=2}^\infty\frac{|x_n-y_n|2^{-n}}{1+|x_n-y_n|}\leq r\bigr\}.
\end{eqnarray*}
We claim that $B_D^{\circ}[a,r]=B_D(a,r)$. To prove our claim, we have the following two cases:

\textbf{Case I:} $r\geq 1$. In this case, we observe that $B_D[x,r]=X=B_D(x,r)$ for all $x\in X$; and so, in particular, we have $B_D^\circ[a,r]=B_D(a,r)$.

\textbf{Case II:} $0<r<1$. Let $x=(x_n)\in B_{D}[a,r]$ be such that $D(a,x)=r$. To prove the claim, it will be enough to show that  $x\not\in B_D^{\circ}[a,r]$. In view of this, we arrive at the following subcases:

\textbf{Subcase I:} $x\in A_{r,a_1}$. In this case for any $\epsilon>0$, we choose $y=(y_n)\in X$, where for each natural number $n$,
\begin{eqnarray*}
y_n &=& \begin{cases} x_n,&~\text{if}~x_n=a_n;\\
x_n+\epsilon\frac{x_n-a_n}{|x_n-a_n|},&~\text{if}~x_n\neq a_n.
\end{cases}
\end{eqnarray*}
Now using \eqref{ee2}, we arrive at the following calculations:
\begin{eqnarray*}
D(x,y)=\frac{\epsilon}{1+\epsilon}\sum_{(n\geq2,~x_n\neq a_n)} 2^{-n}\leq \epsilon\sum_{n\geq 2}{2^{-n}}=\epsilon/2<\epsilon,
\end{eqnarray*}
which show that $y\in B_D(x,\epsilon)$. Also, for every nonnegative real number $u$, we have
\begin{eqnarray}\label{ee3}
\frac{u+\epsilon}{1+u+\epsilon}-\frac{u}{1+u}=\frac{\epsilon}{(1+u)(1+u+\epsilon)}>0.
\end{eqnarray}
Using the inequality \eqref{ee3} for $u=|a_n-x_n|$ for each $n$, we get
\begin{eqnarray*}
D(a,y)=
\sum_{(n\geq 2,~x_n\neq a_n)} \frac{(|a_n-x_n|+\epsilon)2^{-n}}{1+|a_n-x_n|+\epsilon}
>\sum_{(n\geq 2,~x_n\neq a_n)} \frac{|a_n-x_n|2^{-n}}{1+|a_n-x_n|}
&=&D(a,x).
\end{eqnarray*}
Consequently, $x\not\in B_D^\circ[a,r]$.

\textbf{Subcase II:} $x\in A_{r-1/2,1-a_1}$ for $r>1/2$. Proceeding as in the \textbf{Subcase II}, we find that $x\not\in B_D^\circ[a,r]$.

\textbf{Subcase III:} $x\in A_{1/2,1-a_1}$. In this case, $x=(1-a_1,a_2,a_3,\ldots)$, and for $\epsilon>0$,
$B_{D}(x,\epsilon)$ contains the point $z=(1-a_1, a_2+\epsilon,a_3,a_4,\ldots)$, where
\begin{eqnarray*}
D(a,z)=\frac{|1-2a_1|}{2}+\frac{\epsilon}{4(1+\epsilon)}.
\end{eqnarray*}
Since $a_1\in \{0,1\}$, we must have $|1-2a_1|=1$; and so, $D(a,z)=(1/2+\epsilon/(4(1+\epsilon)))>(1/2)$. Thus, $x\not\in B_D^\circ[a,1/2]$.

In each of the above subcases, we have $x\not\in B_D^\circ[a,r]$, and so, $B_D^\circ[a,r]=B_D(a,r)$. We conclude that $D$ is a sleek metric for $X$, where we note that the metrizable component $X_1=\{0,1\}$ of $X$ is never sleek.
\end{proof}
\begin{corollary}
A countable product of  sleek metric spaces is  sleek.
\end{corollary}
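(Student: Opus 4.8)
The plan is to deduce this from Theorem \ref{th3} by first replacing each factor metric with a bounded equivalent one. Suppose $(X_k,d_k)$, $k=1,2,\dots$, are sleek metric spaces, so that each $d_k$ is a sleek metric inducing the topology of $X_k$. For every $k$ I would set $d_k'(x,y)=d_k(x,y)/(1+d_k(x,y))$ for $x,y\in X_k$. As in the proof of Corollary \ref{c1}(b), $d_k'$ is a bounded metric equivalent to $d_k$, and since $d_k(x,z)<d_k(x,y)$ implies $d_k'(x,z)<d_k'(x,y)$, Corollary \ref{c1}(a) applies with $X=Y$ and $f$ the identity, so $d_k'$ is again sleek for $X_k$. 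In particular $\operatorname{diam}(X_k,d_k')\le 1<\infty$ for every $k$.

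Next, because all the diameters $\operatorname{diam}(X_k,d_k')$ are finite, the family $\{(X_k,d_k')\}_{k=1}^\infty$ meets the hypotheses of Theorem \ref{th3}: each $d_k'$ is sleek for $X_k$, so the product metric $D$ built from the $d_k'$ via \eqref{ee1} (with every $\lambda_k=\operatorname{diam}(X_k,d_k')$) is a sleek metric for $X=\prod_k X_k$. Finally, by the portion of Theorem \ref{Na4} recalled above, this $D$ is a metric for $X$, i.e.\ it induces the product topology; hence $\prod_k X_k$, carrying its product topology, is a metrizable space whose topology comes from the sleek metric $D$, which is precisely the assertion that it is a sleek metric space. (The case of a finite product is the same, or is subsumed by adjoining trivial factors.)

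There is no genuinely hard step here; the one point to watch is that Theorem \ref{th3} is encumbered with a finiteness-of-diameter hypothesis on the factors, and it is exactly this that forces the passage from $d_k$ to the bounded equivalent metric $d_k'$. The substantive input that makes that passage legitimate — namely that sleekness is preserved when one replaces a metric by an equivalent one that is ``order-compatible'' with it in the sense of $d_k(x,z)<d_k(x,y)\Rightarrow d_k'(x,z)<d_k'(x,y)$ — is already supplied by Corollary \ref{c1}, so the corollary follows immediately once this reduction is set up.
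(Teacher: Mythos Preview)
Your argument is correct and follows exactly the paper's route: replace each $d_k$ by the bounded equivalent sleek metric $d_k'=d_k/(1+d_k)$ via Corollary~\ref{c1}(b), then invoke Theorem~\ref{th3} to obtain a sleek product metric $D$ inducing the product topology. The only quibble is the parenthetical about ``adjoining trivial factors'' for finite products, since one-point spaces are excluded by the paper's standing convention; but this is harmless, as the finite case follows directly from the same argument (or simply by repeating one of the sleek factors).
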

\begin{proof}
Let $\{(X_k,d_k)\}_{k=1}^\infty$ be a countable collection of metric spaces, where $d_k$ is a sleek metric for $X$ for all $k$. Let $X=\prod_{k=1}^\infty X_i$ be given the product topology. By Corollary \ref{c1}(b), there is an equivalent bounded sleek metric $d'_k$ for $X_k$, and by Theorem \ref{th3}, the product space $\prod_{k=1}^\infty (X_k,d_k')$ has a sleek metric.
\end{proof}
The above result holds for  round metric spaces as well. This was proved by Nathanson \cite{Na}.

The counterexample considered in the converse part of Theorem \ref{th3} suggests the following generalization.
\begin{theorem}\label{th3G}
Let $\{(X_k,d_k)\}_{k=1}^\infty$ be a countable family of  metric spaces, where $\text{diam}(X_k)<\infty$ for all but finitely many $k$. Let $X=\prod_{k}X_k$. Let $D$ be the metric on $X$ as in Theorem \ref{Na4}.
If there  exists at least one positive integer $k$ for which $d_k$ is a sleek metric for $X_k$, then $D$ is a sleek metric for $X$.
\end{theorem}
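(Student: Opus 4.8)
The plan is to verify the sleekness criterion of Theorem~\ref{thA} for $D$ directly, imitating the forward implication of Theorem~\ref{th3} but forcing all the perturbation to happen in one privileged coordinate. So I would fix an index $k_0$ with $d_{k_0}$ sleek for $X_{k_0}$, take arbitrary $x=(x_k)\in X$ and $r>0$, and take any $y=(y_k)\in B_D[x,r]$ with $D(x,y)=r$; since $r>0$ we have $x\ne y$. The goal is to exhibit a sequence in $X\setminus B_D[x,r]$ converging to $y$.

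The key step is to produce a sequence $\{w^n\}$ in $X_{k_0}$ with $w^n\to y_{k_0}$ and $d_{k_0}(x_{k_0},w^n)>d_{k_0}(x_{k_0},y_{k_0})$ for all $n$. I would split into two cases according to whether the $k_0$-th coordinates of $x$ and $y$ agree. If $x_{k_0}\ne y_{k_0}$, put $\rho=d_{k_0}(x_{k_0},y_{k_0})>0$; then $y_{k_0}\in B_{d_{k_0}}[x_{k_0},\rho]$ with $d_{k_0}(x_{k_0},y_{k_0})=\rho$, so sleekness of $d_{k_0}$ together with Theorem~\ref{thA} applied inside $X_{k_0}$ gives that $y_{k_0}$ is a limit point of $X_{k_0}\setminus B_{d_{k_0}}[x_{k_0},\rho]$, which supplies exactly such a sequence. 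If $x_{k_0}=y_{k_0}$, then since a sleek metric space has no isolated points (Corollary~\ref{th0}), $y_{k_0}$ is a limit point of $X_{k_0}$, so any sequence $w^n\to y_{k_0}$ with $w^n\ne y_{k_0}=x_{k_0}$ works, because then $d_{k_0}(x_{k_0},w^n)>0=d_{k_0}(x_{k_0},y_{k_0})$. In either case $w^n\ne y_{k_0}$.

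Then I would set $\xi^n\in X$ to be $y$ with its $k_0$-th coordinate replaced by $w^n$. Only one coordinate changes, so $D(\xi^n,y)=d_{k_0}(w^n,y_{k_0})/(\lambda_{k_0}2^{k_0})\to 0$ and $\xi^n\ne y$, while the definition of $D$ immediately gives
\begin{eqnarray*}
D(x,\xi^n)-D(x,y) &=& \bigl(d_{k_0}(x_{k_0},w^n)-d_{k_0}(x_{k_0},y_{k_0})\bigr)/(\lambda_{k_0}2^{k_0})>0,
\end{eqnarray*}
so $D(x,\xi^n)>r$ and $\xi^n\in X\setminus B_D[x,r]$ for all $n$. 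Hence $y$ is a limit point of $X\setminus B_D[x,r]$, and Theorem~\ref{thA} yields that $D$ is sleek for $X$.

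The computations here are all routine; the only genuinely new point, and the one I expect to need care, is the case $x_{k_0}=y_{k_0}$: there sleekness of $d_{k_0}$ ``at $y_{k_0}$'' is vacuous and must be replaced by the absence of isolated points in $X_{k_0}$. This is precisely what allows a single sleek factor to make the whole product sleek, no matter in which coordinates $x$ and $y$ differ.
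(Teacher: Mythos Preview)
Your proof is correct and follows essentially the same route as the paper's: fix one sleek factor, split into the two cases $x_{k_0}\ne y_{k_0}$ (use Theorem~\ref{thA} in $X_{k_0}$) and $x_{k_0}=y_{k_0}$ (use the absence of isolated points from Corollary~\ref{th0}(a)), perturb only the $k_0$-th coordinate of $y$, and compare $D$-distances termwise. The paper merely normalizes to $k_0=1$ and phrases the criterion as ``for every pair of distinct points $x,y$'' rather than ``for every $r>0$ and every $y$ with $D(x,y)=r$,'' but these are cosmetic differences.
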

\begin{proof} Without loss of generality we assume that the metric $d_1$ is sleek for $X_1$. Let $x=(x_n)$ and $y=(y_n)$ be two distinct points of $X$. We show that $y$ is a limit point of the set $X\setminus B_D[x,D(x,y)]$, which in view of Theorem \ref{thA} will prove that the metric $D$ is sleek for $X$. So it is enough to prove that there exists a sequence of points of the set $X\setminus B_D[x,D(x,y)]$ converging to $y$ in the metric $D$. We construct such a sequence in each of the following two cases:

First assume that $x_1=y_1$. We define a sequence $\{\xi_n\}$ of points of $X\setminus B_D[x,D(x,y)]$ as follows. Since $X_1$ is sleek, by Corollary \ref{th0} no point of $X_1$ is an isolated point. So we can choose a nonconstant sequence $\{x_1^{(n)}\}$ of points of $X_1$ converging to $x_1$, that is, $\lim_{n\rightarrow \infty}d_1(x_1^{(n)},x_1)=0$. Without loss of generality, we can assume that $x_1^{(n)}\neq x_1$ for all $n$ so that $d_1(x_1^{(n)},x_1)$ is always positive. Let $\xi_n=(x_1^{(n)},y_2,y_3,\ldots)\in X$ for all $n$. Then
$D(\xi_n,y)=\frac{1}{2\lambda_1}d_1(x_1^{(n)},x_1)\rightarrow 0$ as $n\rightarrow\infty$, which shows that the sequence $\{\xi_n\}$ converges to $y$.  Also, for every positive integer $n$, we have
\begin{eqnarray*}
D(\xi_n,x)-D(x,y)=\frac{1}{2\lambda_1}d_1(x_1^{(n)},y_1)>0,
\end{eqnarray*}
which shows that $\xi_n\in X\setminus B_D[x,D(x,y)]$ for all $n$.

Now assume that  $x_1\neq y_1$. Since the metric $d_1$ is sleek for $X_1$, by Corollary \ref{th0} the point $y_1$ is a limit point of the set $X_1\setminus B_{d_1}[x_1,d_1(x_1,y_1)]$. Consequently, there exists a sequence $y_1^{(n)}$ of points of $X_1$ such that $d_1(y_1^{(n)},y_1)\rightarrow 0 $ as $n\rightarrow \infty$, and $d_1(y_1^{(n)},x_1)>d_1(y_1,x_1)$ for all $n$. Here we take $\eta_n=(y_1^{(n)},y_2,y_3,\ldots)\in X$ for all $n$. Then $D(\eta_n,y)=\frac{1}{2\lambda_1}d_1(y_1^{(n)},y_1)\rightarrow 0$ as $n\rightarrow \infty$; and for each positive integer $n$
\begin{eqnarray*}
D(\eta_n,x)-D(x,y)=\frac{1}{2\lambda_1}\{d_1(y_{1}^{(n)},x_1)-d_1(y_{1},x_1)\}>0.
\end{eqnarray*}
Thus, $\{\eta_n\}$ is a sequence of points of $X\setminus B_D[x,D(x,y)]$ converging to $y$.
\end{proof}
\begin{corollary}
Let $\{(X_k,d_k)\}_{k=1}^\infty$ be a countable family of  metric spaces, such that $d_k$ is a sleek metric for $X_k$ for at least one value of $k$.
Then the product space $\prod_{k}X_k$ is  sleek.
\end{corollary}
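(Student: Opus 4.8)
The plan is to reduce to Theorem \ref{th3G} by first replacing the given metric on each factor by an equivalent bounded one, exactly as in the proof of the earlier corollary on countable products of sleek spaces, the only difference being that now sleekness is needed on just one distinguished factor. Recall that the assertion "$\prod_k X_k$ is sleek" means that the product topology on $\prod_k X_k$ is induced by \emph{some} sleek metric, so we are free to change the metric on each $X_k$ as long as the topology is preserved.

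First I would fix an index $j$ for which $d_j$ is sleek for $X_j$. By Corollary \ref{c1}(b) there is a metric $d_j'$ on $X_j$, equivalent to $d_j$, which is bounded and still sleek for $X_j$. For every remaining index $k\neq j$ I would take the standard bounded replacement $d_k'(x,y)=d_k(x,y)/(1+d_k(x,y))$, which is a bounded metric on $X_k$ equivalent to $d_k$. Now every $(X_k,d_k')$ has finite diameter, so in particular $\mathrm{diam}(X_k)<\infty$ for all but finitely many $k$, and the metric $D$ on $X=\prod_k X_k$ associated with the family $\{d_k'\}$ by the formula in Theorem \ref{Na4} is a metric inducing the product topology on $X$ (here equivalence of each $d_k'$ with $d_k$ is what guarantees $D$ induces the product topology).

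Since $d_j'$ is sleek for $X_j$, the family $\{(X_k,d_k')\}_{k=1}^\infty$ satisfies the hypotheses of Theorem \ref{th3G}, which therefore yields that $D$ is a sleek metric for $X$. As $D$ induces the product topology, the space $\prod_k X_k$ is sleek, as claimed.

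\textbf{Main obstacle.} There is no deep difficulty here; the one point requiring a little care is that Theorem \ref{th3G} is phrased for the particular metric $D$ built from a prescribed family of factor metrics, so one must perform the passage to bounded equivalent metrics \emph{before} invoking it. That passage must preserve sleekness on the distinguished factor $X_j$ — which is precisely what Corollary \ref{c1}(b) provides — while on the remaining factors only the topology needs to be preserved, which is immediate for the standard bounded replacement. Everything else is routine.
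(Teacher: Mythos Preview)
Your proposal is correct and is exactly the approach the paper has in mind: the paper states this corollary without proof immediately after Theorem~\ref{th3G}, and its intended argument is the evident adaptation of the explicit proof given for the earlier corollary (on countable products of sleek spaces), namely pass to equivalent bounded metrics via Corollary~\ref{c1}(b) and then invoke the relevant product theorem. Your write-up matches this line for line, with the only (harmless) refinement that you apply Corollary~\ref{c1}(b) just to the distinguished sleek factor and use the standard bounded replacement on the others.
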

\begin{examples}
1.  \label{ex1}
  In the dictionary order topology, the topological space $\mathbb{R}^2$ can be identified with the (metrizable) product space $\mathbb{R}_{\text{dis}}\times \mathbb{R}$, where $\mathbb{R}_{\text{dis}}$ and $\mathbb{R}$ respectively denote the set of all real numbers with the discrete metric and the usual metric. Let $\rho$ be the product metric for $\mathbb{R}_{\text{dis}}\times \mathbb{R}$.   Observe that for any two distinct real numbers $x$ and $y$, the metric $\rho$ is sleek for each of the subspaces $\{x\}\times \mathbb{R}$, $\{y\}\times \mathbb{R}$,  and their union $\{x,y\}\times \mathbb{R}$. By Theorem \ref{th2.6}, the metric $\rho$ is sleek for the subspace $\cup_{x\in \mathbb{R}}\{x\}\times \mathbb{R}=\mathbb{R}_{\text{dis}}\times \mathbb{R}$.

2. Let $(X,d)$ be a metric space.  Let $\phi:[0,\infty)\rightarrow [0,\infty)$ be a strictly increasing function such that the composite map $\phi\circ d$ is a metric on $X$. If the metric $d$ is sleek for $X$, then in view of Corollary \ref{c1}(a), the metric $\phi\circ d$ is sleek for $X$. An explicit example of such a function $\phi$ is
\begin{eqnarray*}
\phi(t)=\log(1+t) ~\text{for all}~t\in [0,\infty).
\end{eqnarray*}

3. The  metric $\rho_1$  is not sleek for any of the subspaces $[0,1]$ and $(0,1]$ of real line but the metric $\rho_2$ is sleek for each of the product spaces $\mathbb{R}\times [0,1]$ and $\mathbb{R}\times (0,1]$ of $\mathbb{R}^2$.
\end{examples}
\section{Applications to strictly convex linear metric spaces.}
Recall that a linear metric space is a topological vector space with a compatible translation invariant metric.
We now investigate strictly convex linear metric spaces for round and sleek translation invariant metrics.
A linear metric space $(X,d)$ is said to be \emph{strictly convex} \cite{ahuja} if for any $r>0$ and any two distinct points $x,y\in X$ such that $d(x,0)\leq r$ and $d(y,0)\leq r$, we have $d((x+y)/2,0)<r$. The closed ball $B_d[0,r]$ in the linear metric space $(X,d)$ is said to be strictly convex \cite{vasilev} if for any pair of distinct points $x$ and $y$ in $B_d[0,r]$ and $\lambda\in (0,1)$, the point $(1-\lambda)x+\lambda y$ belongs to $B_d^\circ[0,r]$.
\begin{remark}
If a linear metric space $(X,d)$ is strictly convex, then with the equivalent bounded linear metric $d'$ on $X$ defined by $d'(x,y)=d(x,y)/(1+d(x,y))$ for all $x,y\in X$, the linear metric space $(X,d')$ is strictly convex.
\end{remark}
The following two characterizations of strict convexity corresponding to round and sleek properties were proved in \cite{vasilev} and \cite{JSTD2020} respectively.
\begin{thmx}\label{thB} In a linear metric space $(X,d)$, the following statements are equivalent:
\begin{enumerate}
\item  The space $(X,d)$ is strictly convex.
\item All closed balls in $X$ are strictly convex, and the metric $d$ is round for $X$.
\item All closed balls in $X$ are strictly convex, and the metric $d$ is sleek for $X$.
\end{enumerate}
\end{thmx}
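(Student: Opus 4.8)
The plan is to run the cycle of implications $(1)\Rightarrow(2)\Rightarrow(3)\Rightarrow(1)$. Throughout I use translation invariance to reduce every assertion about a ball $B_d[c,r]$ to the ball $B_d[0,r]$: writing $F(v)=d(v,0)$ one has $B_d[c,r]=c+B_d[0,r]$ and $B_d(c,r)=c+B_d(0,r)$, and the same for interiors and closures, so it suffices to treat balls centred at $0$. I also use that $F$ is continuous and that, for fixed $x,y\in X$, the map $\lambda\mapsto(1-\lambda)x+\lambda y$ of $[0,1]$ into $X$ is continuous.

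For $(1)\Rightarrow(2)$, assume $(X,d)$ is strictly convex. To show a closed ball $B_d[0,r]$ is strictly convex, fix distinct $x,y\in B_d[0,r]$, put $p_\lambda=(1-\lambda)x+\lambda y$, and consider $\Lambda=\{\lambda\in[0,1]:F(p_\lambda)\le r\}$. It contains $0$ and $1$, is closed, and is midpoint-convex: if $\lambda\ne\mu$ lie in $\Lambda$ then $p_\lambda\ne p_\mu$ (since $x\ne y$) are distinct points of $B_d[0,r]$, so strict convexity gives $F\bigl((p_\lambda+p_\mu)/2\bigr)=F(p_{(\lambda+\mu)/2})<r$. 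Hence $\Lambda$ contains all dyadic rationals of $[0,1]$ and, being closed, equals $[0,1]$, so $B_d[0,r]$ is convex. Then for $\lambda_0\in(0,1)$, choosing $\delta>0$ small and writing $p_{\lambda_0}$ as the midpoint of the distinct ball points $p_{\lambda_0-\delta},p_{\lambda_0+\delta}$ gives $F(p_{\lambda_0})<r$, i.e. $p_{\lambda_0}\in B_d(0,r)\subseteq B_d^{\circ}[0,r]$; that is the strict convexity of $B_d[0,r]$. For roundness it remains to show $B_d[0,r]\subseteq\overline{B_d(0,r)}$ (the reverse inclusion is automatic): if $F(y)=r$, then $\lambda y=(1-\lambda)0+\lambda y$ satisfies $F(\lambda y)<r$ for $\lambda\in(0,1)$ by what was just proved, and $\lambda y\to y$ as $\lambda\to1^-$.

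For $(2)\Rightarrow(3)$: strictly convex closed balls are in particular convex, so every open ball $B_d(0,r)=\bigcup_{0<r'<r}B_d[0,r']$ is convex (an increasing union of convex sets), nonempty and open. Roundness says $\overline{B_d(0,r)}=B_d[0,r]$. Now invoke the standard fact that a convex set $C$ with nonempty interior in a topological vector space satisfies $(\overline{C})^{\circ}=C^{\circ}$; with $C=B_d(0,r)$ this gives $B_d^{\circ}[0,r]=\bigl(\overline{B_d(0,r)}\bigr)^{\circ}=B_d(0,r)$, so $d$ is sleek, while the balls remain strictly convex. Finally $(3)\Rightarrow(1)$ is immediate: if $x\ne y$ with $F(x),F(y)\le r$, strict convexity of $B_d[0,r]$ puts $(x+y)/2$ in $B_d^{\circ}[0,r]$, which equals $B_d(0,r)$ by sleekness, whence $F\bigl((x+y)/2\bigr)<r$.

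The only genuinely fiddly point should be the passage in $(1)\Rightarrow(2)$ from the midpoint form of strict convexity to proper convex combinations landing in $B_d^{\circ}[0,r]$, handled by the ``closed midpoint-convex implies convex'' lemma together with continuity of the vector operations. The conceptual heart of the theorem is the step in $(2)\Rightarrow(3)$: once the balls are convex, the purely topological identity $(\overline{C})^{\circ}=C^{\circ}$ for a convex set $C$ with nonempty interior turns roundness into sleekness, and running it in the complementary form $\overline{C^{\circ}}=\overline{C}$ with $C=B_d[0,r]$ turns sleekness back into roundness, so in a linear metric space with convex balls the two properties coincide. It remains only to record the routine translation-invariance reductions and the continuity of $F$ and of scalar multiplication, which are invoked silently in the limiting arguments above.
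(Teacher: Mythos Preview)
The paper does not actually prove this theorem; it is stated as a quoted result, with the equivalence $(1)\Leftrightarrow(2)$ attributed to Vasil'ev and $(1)\Leftrightarrow(3)$ to Singh and Narang, so there is no in-paper argument to compare your attempt against.

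That said, your cycle $(1)\Rightarrow(2)\Rightarrow(3)\Rightarrow(1)$ is correct. The bootstrap in $(1)\Rightarrow(2)$---passing from the midpoint hypothesis to convexity of $B_d[0,r]$ via the closed, midpoint-convex set $\Lambda\subseteq[0,1]$, and then upgrading to strict ball convexity by writing $p_{\lambda_0}$ as the midpoint of two nearby segment points---is sound; continuity of $\lambda\mapsto p_\lambda$ and of $F$ justifies the closure of $\Lambda$. In $(2)\Rightarrow(3)$ the appeal to the topological-vector-space identity $(\overline{C})^{\circ}=C^{\circ}$ for convex $C$ with nonempty interior, applied with $C=B_d(0,r)$, is legitimate and isolates cleanly why roundness and sleekness coincide once the balls are convex; your companion remark that the dual identity $\overline{C^{\circ}}=\overline{C}$ with $C=B_d[0,r]$ yields $(3)\Rightarrow(2)$ directly is a nice complement. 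The implication $(3)\Rightarrow(1)$ is immediate as you say. One minor point worth making explicit in the roundness portion of $(1)\Rightarrow(2)$: the segment you use runs from $0$ to $y$, so you are tacitly using $y\neq0$, which is guaranteed by $F(y)=r>0$.
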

So, in a strictly convex linear metric space having strict ball convexity, the notions of being round and being sleek are equivalent.
\begin{remarks}
\begin{enumerate}
\item Let $(X,d)$ be a linear metric space. Then there exists an equivalent bounded translation-invariant metric $d'$ on $X$ for which $(X,d')$ is not strictly convex.
\begin{proof} Let $0\neq b\in X$ and $0<r<d(0,b)$. Let
$d'(x,y)=\min\{r,d(x,y)\}$ for all $x,y\in X$.
Then $d'$ is an equivalent bounded metric for $X$. Since $d$ is translation-invariant and the function $\min$ is continuous, the metric $d'$ is translation invariant. As in the proof of Theorem \ref{th6}, we see that the translation-invariant metric $d'$ is not sleek. So, by Theorem \ref{thB}, the linear metric space $(X,d')$ is not strictly convex.
\end{proof}
\item Let $(X_i,d_i)_{k=1}^n$ be a collection of $n$ metric spaces. The product topology of $\prod_{k=1}^nX_k$ can be induced by the metric $d$, where
\begin{eqnarray*}
d(x,y) =\sqrt{\sum_{i=1}^n(d_i(x_i,y_i))^2},
\end{eqnarray*}
for all $x=(x_1,\ldots,x_n)$ and $y=(y_1,\ldots,y_n)$ in $\prod_{k=1}^nX_k$.
Now if $(X_k,d_k)$ is a linear metric space for all $k$, then the product space $(\prod_{k=1}^nX_k,d)$ is also a linear metric space.
If $(X_k,d_k)$ is a strictly convex linear metric space for all $k$, then $(\prod_{k=1}^nX_k,d)$ need not be strictly convex (see \cite[Example 2.2]{SN81}).
\item Let $\{(X_k,d_k)\}_{k=1}^\infty$ be a countable family of linear metric spaces, such that $\text{diam}(X_k)<\infty$ for all but finitely many $k$. Let $X=\prod_{k}X_k$, and the metric $D$ as in Theorem \ref{Na4}. Let $(X,D)$ and $(X_k,d_k)$ for each $k$  have strict ball convexity.  Then $(X,D)$ is strictly convex if and only if $(X_k,d_k)$ is strictly convex for all $k$.
\begin{proof}
In view of Theorem \ref{thB}, the metric space $(X,D)$ is strictly convex $\Leftrightarrow$ the metric $D$ is round for $X$ $\Leftrightarrow$  the metric $d_k$ is round for $X_k$ for all $k$ as follows from Theorem \ref{Na4} $\Leftrightarrow$ the metric space $(X_k,d_k)$ is strictly convex for all $k$ as follows from Theorem \ref{thB}. This completes the proof.
\end{proof}
\item Let $\{(X_k,d_k)\}_{k=1}^\infty$ be a countable collection of strictly convex linear metric spaces. Then there exists a bounded translation-invariant metric for $X$ under which the product space $\prod_{k}X_k$ becomes a strictly convex linear metric space.
\end{enumerate}
\end{remarks}
\subsection*{Acknowledgments.}
The present research is supported by Science and Engineering Research Board(SERB), a
statutory body of Department of Science and Technology (DST), Govt. of India through the project grant
no. MTR/2017/000575 awarded to the first author under the MATRICS Scheme.


\begin{thebibliography}{1}
\bibitem {artemiadis} Art\'emiadis, N. (1965). A remark on metric spaces. \emph{Proc. Kon. Ned. Akad. van Wetensch.} A {68}:316--318.
\bibitem {wong} Wong, J.~S.~W. (1966). Remarks on metric spaces. \emph{Indag.~Math. (Proceedings).} {69}:70--73.
\bibitem {Na} Nathanson, M.~B. (1975). Round metric spaces. \emph{The American Mathematical Monthly.}  {82}(7):738--741.
\bibitem {TH} Kiventidis, T. (1988). Some properties of the spheres in metric spaces. \emph{Indian J. pure appl. Math.} {19}(11):1077--1080.
\bibitem {JSTD2020} Singh, J. and Narang, T.~D. (2021). Remarks on balls in metric spaces. \emph{The Journal of Analysis.} (2021) pp 11. \url{https://doi.org/10.1007/s41478-020-00297-z}
\bibitem {ahuja} Ahuja, G.~C., Narang, T.~D., and Trehan, S. (1977) Best approximation on convex sets in metric linear spaces. \emph{Math. Nachr.} {78}:125--130. \url{https://doi.org/10.1002/mana.19770780110}
\bibitem {menger} Menger, K. (1928). Untersuchungen \"uber allgemeine Metrik.  \emph{Math. Ann.} {100}: 75--163. \url{https://doi.org/10.1007/BF01448840}
\bibitem {b1}  Khamsi, M.~A. and Kirk, W.~A. (2001). \emph{An Introduction to Metric Spaces and Fixed Point Theory.} John Wiley and Sons Inc. pp 35.
\bibitem {vasilev} Vasil'ev, A.~I. (1979). Chebychev sets and strong convexity of metric linear spaces. \emph{Math. Notes.} {25}:335--340. \url{http://mi.mathnet.ru/eng/mz8328}
\bibitem{SN81} Sastry, K.~P.~R. and Naidu, S.~V.~R. (1981). Uniform convexity and strict convexity in metric linear spaces. \emph{Math. Nachr.} {104}:331--347.
\end{thebibliography}
\end{document}